\def\th{\theta}
\def\R{\mathbb R}
\def\dd{\Delta}
\def\d{\delta}
\def\G{{\mathbb G}}
\def\H{{\mathbb H}}
\def\l{\lambda}
\def\labda1{\lambda_1}
\def\labda2{\lambda_2}
\def\e{\varepsilon}
\def\f{\phi}
\def\k{\kappa}
\def\s{\sigma}
\def\comment#1{\relax}
\def\=in{\mathop{\rm =}}
\numberwithin{equation}{section}
\theoremstyle{plain}
\def\e{\epsilon}
\def\l{\lambda}
\def\f{\phi}
\def\d{\delta}
\def\dd{\Delta}
\def\th{\theta}
\def\s{\sigma}
\def\G{{\mathbb G}}
\def\H{{\mathbb H}}
\def\Q{{\mathbb Q}}
\def\R{{\mathbb R}}
\def\bmy{\bm y}
\newtheorem{theorem}{Theorem}
\newtheorem{lemma}{Lemma}
\newtheorem{remark}{Remark}
\newtheorem{example}{Example}
\begin{document}
\begin{frontmatter}
\title{Nonparametric Least squares estimators for interval censoring}
\runtitle{LS estimators for interval censoring}

\begin{aug}
\author{\fnms{Piet} \snm{Groeneboom}\corref{}\ead[label=e1]{P.Groeneboom@tudelft.nl}}
\runauthor{Piet Groeneboom}
\address{Delft Institute of Applied Mathematics, Mekelweg 4, 2628 CD Delft,
	The Netherlands.\\ 
	\printead{e1}} 
\end{aug}

\begin{abstract}
The limit distribution of the nonparametric maximum likelihood estimator for interval censored data with more than one observation time per unobservable observation, is still unknown in general. For the so-called separated case, where one has  observation times which are at a distance larger than a fixed epsilon>0, the limit distribution was derived in [5]. For the non-separated case there is a conjectured limit distribution, given in [10], Section 5.2 of Part 2. Whether this conjecture holds is still unknown, but the present paper shows that for sample sizes 1000 and 10,000 this limit behavior is still not clearly seen.

We prove consistency of a related nonparametric isotonic least squares estimator and sketch of the proof for its limit distribution. We also provide simulation results to show how the nonparametric MLE and least squares estimator behave in comparison. Moreover, we discuss a simpler least squares estimator that can be computed in one step, but is inferior to the other least squares estimator, since it does not use all information.

 For the simplest model of interval censoring, the current status model, the nonparametric maximum likelihood and least squares estimators are the same. This equivalence breaks down if there are more observation times per unobservable observation. The computations for the simulation of the more complicated interval censoring model were performed by using the iterative convex minorant algorithm. They are provided in the GitHub repository [7].
\end{abstract} 

\begin{keyword}[class=AMS]
\kwd[Primary ]{62G05}
\kwd{62N01}
\kwd[; secondary ]{62-04}
\end{keyword}

\begin{keyword}
\kwd{interval censoring}
\kwd{nonparametric least squares estimators}
\kwd{nonparametric maximum likelihood}
\kwd{asymptotic distribution theory}
\kwd{integral equations}
\kwd{smooth functional theory}
\kwd{iterative convex minorant algorithm}
\end{keyword}

\end{frontmatter}

\section{Introduction}
\label{sec:intro}
The simplest and most studied interval censoring model is the so-called interval censoring, case 1, or current status model.
This model can be defined in the following way (see, e.g., Section 2.3 of \cite{piet_geurt:14}).

Consider a sample $X_1,X_2,\ldots,X_n$, drawn from a distribution with distribution function $F_0$. Instead of observing the $X_i$'s, one only observes for each $i$ whether or not $X_i\le T_i$ for some random $T_i$ (independent of the other $T_j$'s and all $X_j$'s). More formally, instead of observing the $X_i$'s, one observes
\begin{equation}
\label{eq:obsIC1}
(T_i,\Delta_i)=(T_i,1_{\{X_i\le T_i\}}).
\end{equation}
One could say that the $i$-th observation represents the current status of item $i$ at time $T_i$.

We want to estimate the unknown distribution function $F_0$ based on the data given in (\ref{eq:obsIC1}). For this problem the log likelihood function for distribution functions $F$ (conditional on the $T_i$'s) is given by
\begin{equation}
\label{eq:loglikICI}
\ell(F)=\sum_{i=1}^n \left(\dd_i\log F(T_i)+(1-\dd_i)\log(1-F(T_i))\right).
\end{equation}
The (nonparametric) maximum likelihood estimator $\hat F_n$ maximizes $\ell$ over the class of {\it all} distribution functions.

Since distribution functions are by definition nondecreasing, computing the maximum likelihood estimator poses a shape constrained optimization problem. As can be seen from (\ref{eq:loglikICI}), the value of $\ell$ only depends on the values that $F$ takes at the observed time points $T_i$. Hence one can choose to consider only distribution functions that are constant between successive observed time points $T_i$. The lemma below shows that  this estimator can be characterized in terms of a greatest convex minorant of a certain diagram of points. The following result is Lemma 2.7 in \cite{piet_geurt:14} (and Proposition 1.2 in \cite{GrWe:92}).

\begin{lemma}
\label{lem:charMLECS}[Characterization of the nonparametric ML estimator in the current status model]
Consider the cumulative sum (cusum) diagram consisting of the points $P_0=(0,0)$ and
$$
P_i=\left(i,\sum_{j=1}^i\dd_j\right),\,\,\,1\le i\le n,
$$
where the $\dd_i$'s correspond to the $T_i$'s, which are supposed to be ordered $0<T_1\dots< T_n$ (one can also allow ties, but we disregard this further complication here). 
Then the nonparametric MLE $\hat F_n(T_i)$ is given by the left derivative of the convex minorant of this diagram of points, evaluated at the point $i$. This maximizer is unique.
\end{lemma}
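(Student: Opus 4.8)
First I would pass to the finite-dimensional parametrization $y_i=F(T_i)$, $i=1,\dots,n$. Since $0<T_1<\dots<T_n$ and $F$ is a distribution function, the admissible vectors are exactly those in the compact convex set $C=\{y\in\R^n:0\le y_1\le\dots\le y_n\le1\}$, and conversely every $y\in C$ is realized by some distribution function (for instance the one with jumps $y_1,y_2-y_1,\dots,y_n-y_{n-1}$ at $T_1,\dots,T_n$ and the remaining mass $1-y_n$ placed beyond $T_n$). By (\ref{eq:loglikICI}) the value of $\ell$ depends on $F$ only through $(F(T_1),\dots,F(T_n))$, so maximizing $\ell$ over all distribution functions is the same as maximizing $L(y):=\sum_{i=1}^n\psi_i(y_i)$ over $y\in C$, where $\psi_i(y):=\dd_i\log y+(1-\dd_i)\log(1-y)$ with the convention $0\log0=0$. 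Each $\psi_i$ is upper semicontinuous on $[0,1]$ and strictly concave on the subinterval where it is finite, so $L$ is upper semicontinuous and strictly concave on the compact convex set $C$; hence $L$ attains its maximum at a unique point $\hat y$, which already yields the asserted uniqueness of the numbers $\hat F_n(T_i)=\hat y_i$ (the function $\hat F_n$ itself being unrestricted between the $T_i$).

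Next I would take as candidate the vector $\bar y$ whose $i$-th coordinate is the left derivative at $i$ of the greatest convex minorant of $P_0,\dots,P_n$, and use its standard structure: the minorant touches the cumulative sum diagram at integers $0=i_0<i_1<\dots<i_m=n$ and on each interval $[i_{\ell-1},i_\ell]$ is linear with slope $\bar\dd_\ell:=(i_\ell-i_{\ell-1})^{-1}\sum_{j=i_{\ell-1}+1}^{i_\ell}\dd_j$, the slopes being nondecreasing by convexity. Hence $\bar y_i=\bar\dd_\ell\in[0,1]$ for $i$ in the block $B_\ell=\{i_{\ell-1}+1,\dots,i_\ell\}$, and the block values are nondecreasing, so $\bar y\in C$. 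The analytic facts I need are that, writing $S_j:=\sum_{i=1}^j(\dd_i-\bar y_i)$, one has $S_j\ge0$ for all $j$ and $S_{i_\ell}=0$ at every touch point (in particular $S_0=S_n=0$); both hold because $\sum_{i\le j}\bar y_i$ equals the value of the convex minorant at $j$, which lies below the cumulative sum diagram and meets it at the touch points.

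Then I would verify the first-order optimality condition for this concave program, namely that $\sum_{i=1}^n\psi_i'(\bar y_i)(v_i-\bar y_i)\le0$ for every $v\in C$, which by concavity of $L$ is both necessary and sufficient for a point of $C$ to be the maximizer. First, $\nabla L(\bar y)$ is finite, because a block on which $\bar y$ equals $0$ can contain only indices with $\dd_i=0$ (where $\psi_i'(0)=-1$) and a block on which $\bar y$ equals $1$ only indices with $\dd_i=1$ (where $\psi_i'(1)=1$). Splitting the sum over the blocks $B_1,\dots,B_m$: if $\bar\dd_1=0$ the contribution of $B_1$ is $\sum_{i\in B_1}(-v_i)\le0$ since $v_i\ge0$; if $\bar\dd_m=1$ the contribution of $B_m$ is $\sum_{i\in B_m}(v_i-1)\le0$ since $v_i\le1$; and on any block with $0<\bar\dd_\ell<1$ one has $\psi_i'(\bar y_i)=(\dd_i-\bar y_i)/\big(\bar\dd_\ell(1-\bar\dd_\ell)\big)$ with a positive factor constant across the block, so, using $\sum_{i\in B_\ell}(\dd_i-\bar y_i)=S_{i_\ell}-S_{i_{\ell-1}}=0$ and Abel summation, the contribution is a positive multiple of $\sum_{i\in B_\ell}(\dd_i-\bar y_i)v_i=\sum_{i=i_{\ell-1}+1}^{i_\ell-1}S_i(v_i-v_{i+1})\le0$, where the inequality uses $S_i\ge0$, $v_i\le v_{i+1}$, and $S_{i_{\ell-1}}=S_{i_\ell}=0$. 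Adding the three types of contributions gives the optimality inequality, hence $\bar y=\hat y$ and $\hat F_n(T_i)=\bar y_i$.

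The step I expect to need the most care is the treatment of the extreme blocks, on which $\bar y_i\in\{0,1\}$ and the gradient expression $(\dd_i-\bar y_i)/\big(\bar y_i(1-\bar y_i)\big)$ degenerates; it is resolved by the elementary sign argument above, together with the observation that a block of constant value $0$ or $1$ consists only of observations with $\dd_i=0$ or with $\dd_i=1$, respectively. An alternative route that avoids the explicit verification is to note that $\hat y$ coincides with the least-squares isotonic regression of $(\dd_1,\dots,\dd_n)$ with unit weights---on each block both the likelihood and the least-squares criterion are maximized by the block average of the $\dd_i$---and then to invoke the classical max--min (pool-adjacent-violators) representation of isotonic regression, which is exactly the left derivative of the greatest convex minorant of the cumulative sum diagram.
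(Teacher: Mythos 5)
Your proof is correct: the reduction to the finite-dimensional strictly concave program on $C=\{0\le y_1\le\dots\le y_n\le1\}$, the verification of the first-order optimality condition $\sum_i\psi_i'(\bar y_i)(v_i-\bar y_i)\le0$ using the block structure of the greatest convex minorant together with $S_j\ge0$ and $S_{i_\ell}=0$ at touch points, and the separate sign argument for blocks where $\bar y_i\in\{0,1\}$ (which is exactly where differentiability could fail) are all sound, and they also deliver the uniqueness claim via strict concavity. The paper does not prove the lemma itself but cites Proposition 1.2 of \cite{GrWe:92} and Lemma 2.7 of \cite{piet_geurt:14}, whose argument is the same Fenchel-type first-order verification, so your proof is essentially the standard one (and your closing remark about the least-squares/isotonic-regression route is precisely the equivalence the paper invokes right after the lemma via Theorem 1.5.1 of \cite{rwd:88}).
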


\begin{remark}
{\rm
The {\it left} derivative of the convex minorant at $P_i$ determines the value of $\hat{F}_n$ at $T_i$ and hence (by right continuity of the distribution function $\hat F_n$) on $[T_i,T_{i+1})$, a region to the {\it right} of $T_i$.
}
\end{remark}

Lemma \ref{lem:charMLECS} shows that $\hat F_n$ is in fact the {\it isotonic regression} on the indicators $\dd_i=1_{\{X_i T_i\}}$, that is, it minimizes
\begin{align}
\label{LS_CS}
\sum_{i=1}^n \{F(T_i)-\dd_i\}^2
\end{align}
over all monotone nondecreasing (not necessarily bounded by $0$ and $1$) functions $F$, see p.\ 43 in section 1.1 of part 2 of \cite{GrWe:92}. It also follows from Theorem 1.2.1 on p.\ 7 of \cite{rwd:88}, where the connection brtween the derivative of the greatest convex minorant and the solution of the isotonic regression problem is given.

So we have two characterizations of the nonparametric maximum likelihood estimator, the characterization as a maximizer of (\ref{eq:loglikICI}) and the characterization as a minimizer of (\ref{LS_CS}). Note that the weights are equal and constant in the least sauares problem.
 
If we turn to the (common) situation where there are more observation times per unobservable $X_i$, the situation is considerably more complicated, and the limit distribution of the nonparametric MLE is still unknown generally. We consider here the simplest extension, where one has two observation times per unobservable $X_i$ ({\it interval censoring, case 2}). Instead of observing the $X_i$'s, one observes
\begin{equation}
\label{eq:interval2}
(U_i,V_i,\Delta_{i0},\Delta_{i1})\stackrel{\text{def}}=(U_i,V_i,1_{\{X_i\le U_i\}},1_{\{U_i<X_i\le V_i\}}),
\end{equation}
so instead of only observing that $X_i$ is larger than or smaller than an observation $T_i$, we now have an observation interval $(U_i,V_i)$, $U_i<V_i$,  and we know whether our unobservable $X_i$ is inside the interval or to the left or right of it. Just as in the current status model, we assume that $X_i$ is distributed independently of $(U_i,V_i)$. The log likelihood (\ref{eq:loglikICI}) changes into
\begin{align}
\label{eq:loglik_IC2}
\ell(F)=\sum_{i=1}^n \left\{\dd_{i0}\log F(U_i)+\dd_{i1}\log(F(V_i)-F(U_i))+\dd_{i2}\log(1-F(V_i))\right\},
\end{align}
where $\dd_{i2}=1-\dd_{i0}-\dd_{i1}$.
It is not clear that there is an equivalent to the minimization of (\ref{LS_CS}) in this situation. We cannot use Theorem 1.5.1 in \cite{rwd:88}  because we have the difference of $F(V_i)$ and $F(U_i)$ in the terms $\log(F(V_i)-F(U_i))$ instead of just $F(U_i)$ or $F(V_i)$ by itself only.

The perhaps most natural least squares approach is to consider minimization of
\begin{align}
\label{LS_criterion_IC}
&\sum_{i=1}^n \left\{\left\{F(U_i)-\dd_{i0}\right\}^2+ \left\{F(V_i)-F(U_i)-\dd_{i1}\right\}^2
+\left\{1-F(V_i)-\dd_{i2}\right\}^2\right\}
\end{align}
over all distribution functions $F$, so an isotonic regression on the indicators $\dd_{i0},\dd_{i1},\dd_{i2}$, $i=1,\dots,n$.

As an example, we analyze the behavior of the nonparametric MLE, maximizing (\ref{eq:loglik_IC2}) and the least squares estimator minimizing (\ref{LS_criterion_IC}) in Example \ref{example1}.

 \begin{figure}[!ht]
\begin{subfigure}[b]{0.45\textwidth}
\includegraphics[width=\textwidth]{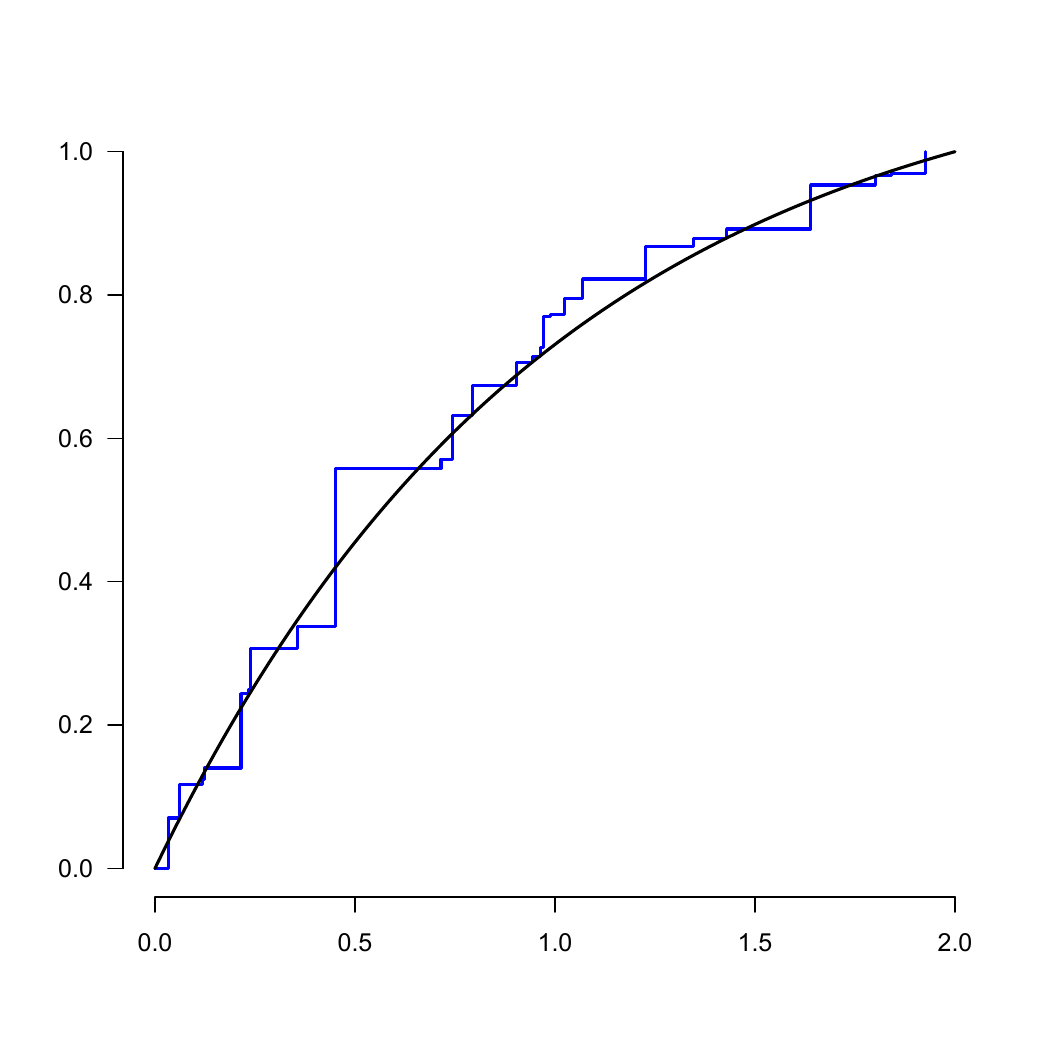}
\caption{}
\label{fig:CMLE1000_IC}
\end{subfigure}
\begin{subfigure}[b]{0.45\textwidth}
\includegraphics[width=\textwidth]{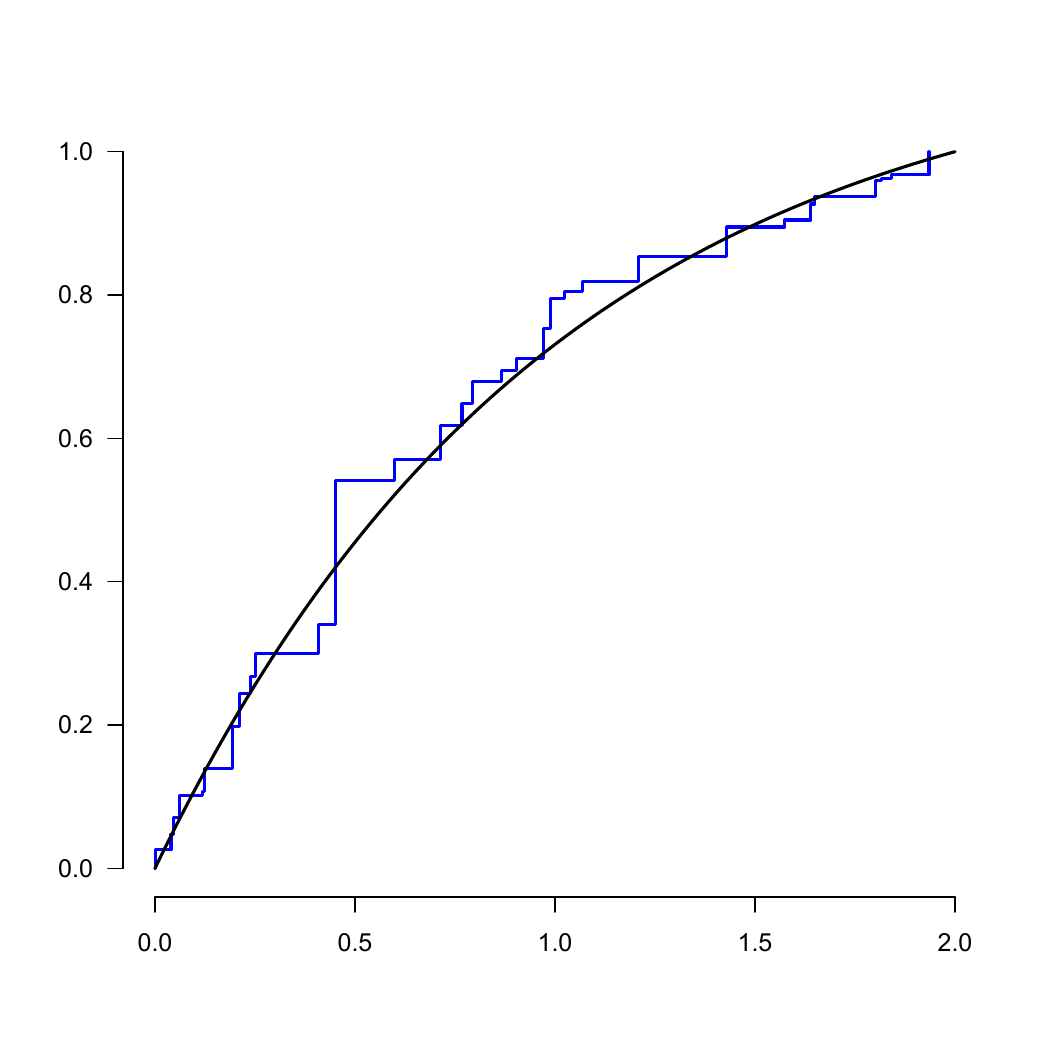}
\caption{}
\label{fig:LS1000_IC}
\end{subfigure}
\caption{(a) Nonparametric MLE (blue) of $F_0$ for a sample of size $n=1000$, (b) Nonparametric least squares estimate (blue) minimizing (\ref{LS_criterion_IC}) of $F_0$ for the same sample. The solid black curve shows $F_0$.}
\label{figure:MLE+LS}
\end{figure}

\begin{example}
\label{example1}
{\rm Suppose $X_1,\dots,X_n$ is either a sample from the truncated exponential distribution on $[0,2]$, with density
\begin{align*}
f_0(x)=\frac{\exp\{-x\}}{1-\exp\{-2\}}1_{[0,2]}(x),\qquad x\in\R,
\end{align*}
or a sample from the Uniform distribution on $[0,2]$,
and let the $(U_i,V_i),\,V_i>U_i$, be the order statistics of the Uniform distribution on $[0,2]^2$ for a sample of size 2. Note that this is a prototype of the non-separated case, where we can have arbitrarily small observation intervals $[U_i,V_i]$

\begin{figure}[!ht]
\begin{subfigure}[b]{0.45\textwidth}
\includegraphics[width=\textwidth]{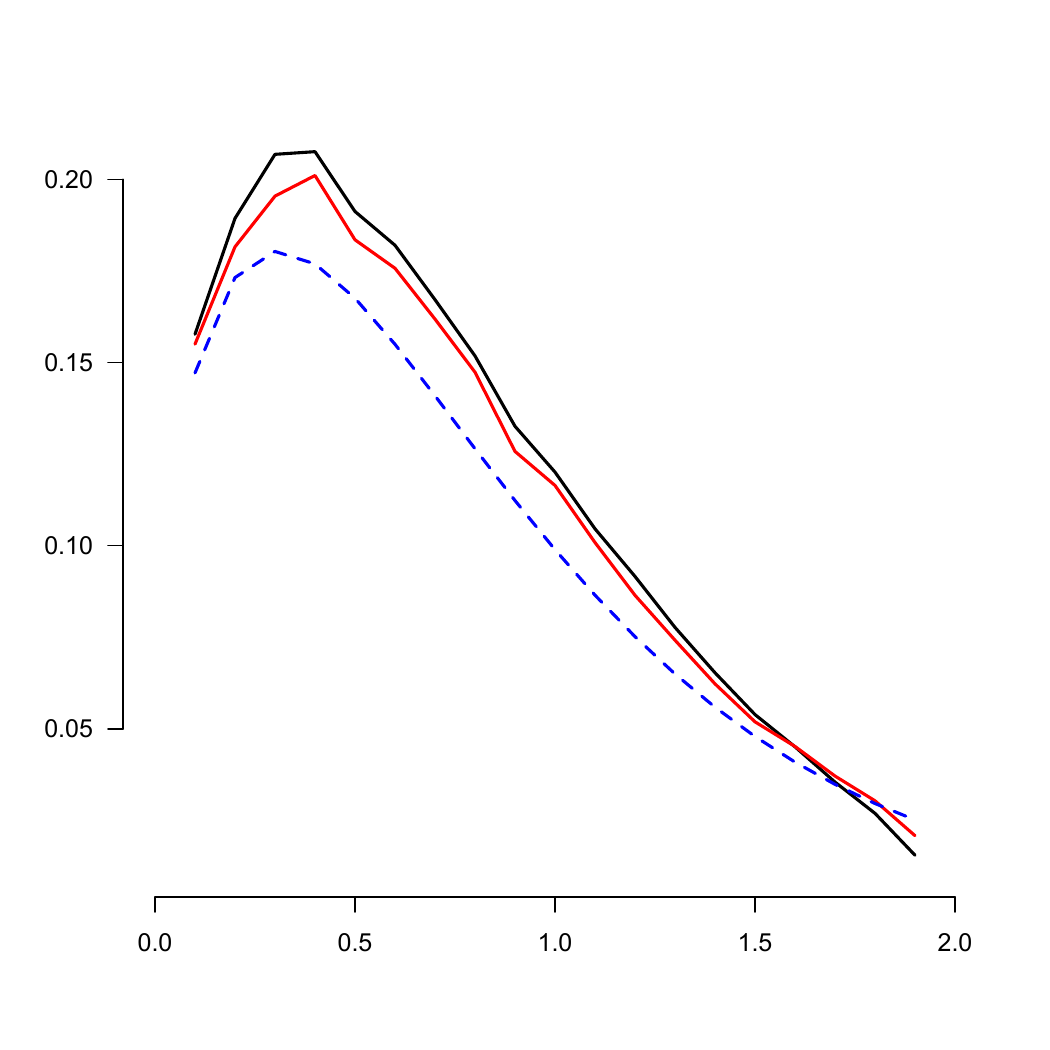}
\caption{}
\label{fig:CMLE1000_exponential}
\end{subfigure}
\begin{subfigure}[b]{0.45\textwidth}
\includegraphics[width=\textwidth]{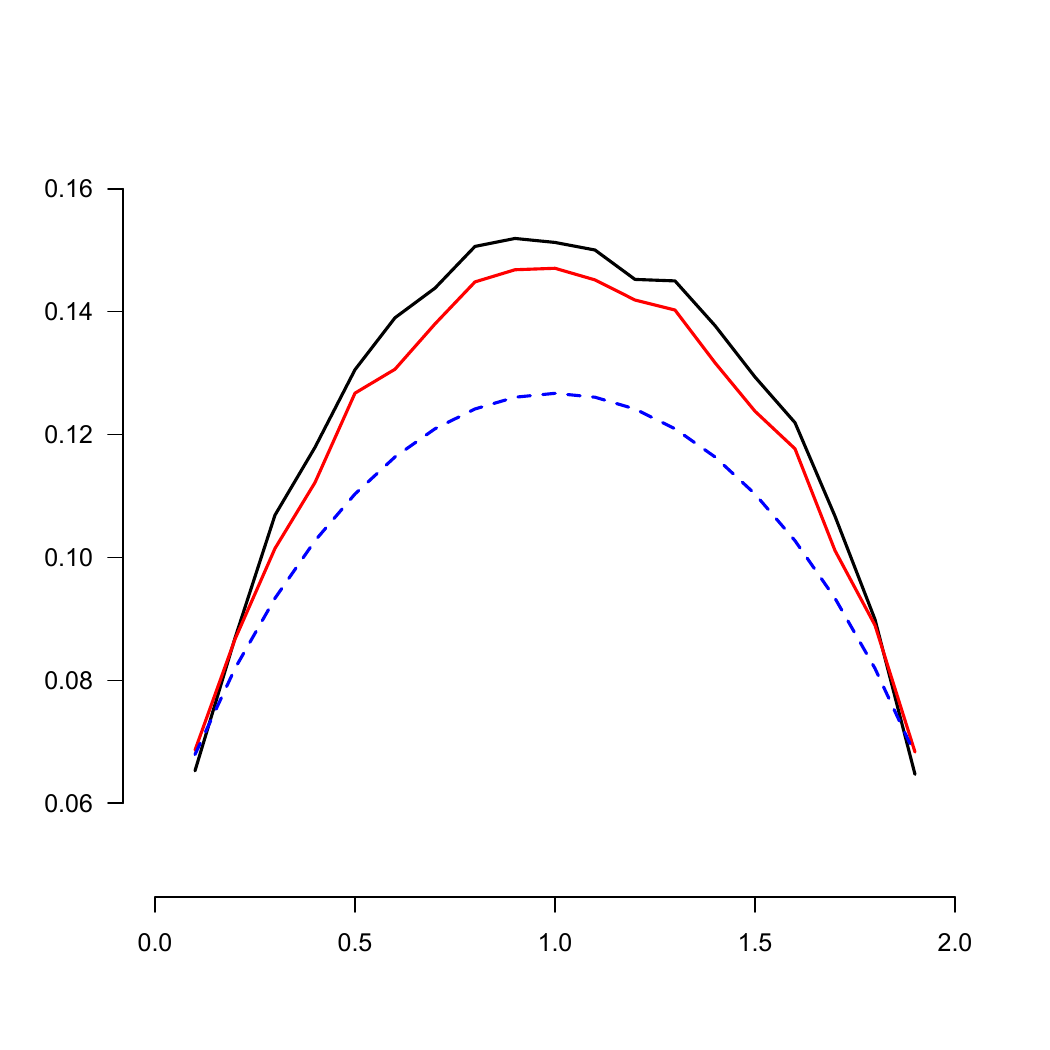}
\caption{}
\label{fig:LS1000_MLE_uniform}
\end{subfigure}
\caption{(a) Simulated variances, times $n^{2/3}$, of the nonparametric MLE (black solid curve) and the least squares estimate minimizing (\ref{LS_criterion_IC}) (red)), for $t_i=0.1,0.2,\dots,1.9$, linearly interpolated between values at the $t_i$ for the model of Example \ref{example1}. The blue dashed curve is the theoretical limit curve one obtains from Theorem \ref{th:limit_LS} in Section \ref{sec:LS} for the LS estimator. The simulated variances are based on $10,000$ simulations of samples of size $n=1000$ for the truncated exponential distribution function $F_0$ on $[0,2]$ and the order statistics of the uniform distribution on $[0,2]^2$ as observation times. (b) The same comparison, but now for $F_0$ uniform on $[0,2]$.}
\label{figure:MLE+LS_variances}
\end{figure}

Figure \ref{figure:MLE+LS} shows the nonparametric MLE and the nonparametric least squares estimate for a simulated sample of size $n=1000$ for these models. In both cases the estimate has to be computed iteratively, we do not have a one step algorithm as in the current status model.

We computed the variances of the estimates times $n^{2/3}$ for $10,000$ samples of size $n=1000$ for $t_i=i\cdot 0.1\,$, $i=1,\dots,19$. A comparison of the simulated variances is shown in Figure \ref{figure:MLE+LS_variances}. It suggests that overall the least squares estimator is slightly better than the nonparametric MLE for this sample size in this model.
 }
 \end{example}

 Inspired by the conjectured faster rate of convergence of the MLE in the non-separated case (i.e., observation intervals can be arbitrarily small) in Section 5.2 of Part 2 of \cite{GrWe:92}, Lucien Birg\'e constructed in \cite{lucien:99} a histogram estimator which actually achieves rate $(n\log n)^{1/3}$ locally (but suffers severely from bias, in contrast with the MLE). As noted above, we still do not know whether the MLE also achieves this faster rate.
  
 It is also possible to define a least squares estimator which can be computed in one step, using the convex minorant (or ``pool adjacent violators'') algorithm. An estimator of this type was proposed for the uniform deconvolution problem in \cite{eszuijlen:96}. This estimator minimizes
\begin{align}
\label{LS_criterion_IC2a}
&\sum_{i=1}^n \left\{\left\{F(U_i)-\dd_{i0}\right\}^2+\left\{F(V_i)-\dd_{i0}-\dd_{i1}\right\}^2\right\}
\end{align}
over all distribution functions $F$, so is also an isotonic regression on the indicators $\dd_{i0},\dd_{i1},\dd_{i2}$, $i=1,\dots,n$. Criterion (\ref{LS_criterion_IC2a}) can also be written
\begin{align}
\label{LS_criterion_IC3}
&\sum_{i=1}^n \left\{\left\{F(U_i)-\dd_{i0}\right\}^2+\left\{1-F(V_i)-\dd_{i2}\right\}^2\right\}.
\end{align}
We compare the two least squares estimators for sample size $n=10,000$ in Figure \ref{figure:two_LS_estimators10,000}, from which the superiority of the estimator, based on minimizing (\ref{LS_criterion_IC}), seems clear. This second least squares estimator is in fact very much of the same type as the LS estimator/MLE in the current status model; it only checks whether the unobservable observation is larger than or smalle than a real observation time.
\begin{figure}[!ht]
\begin{subfigure}[b]{0.4\textwidth}
\includegraphics[width=\textwidth]{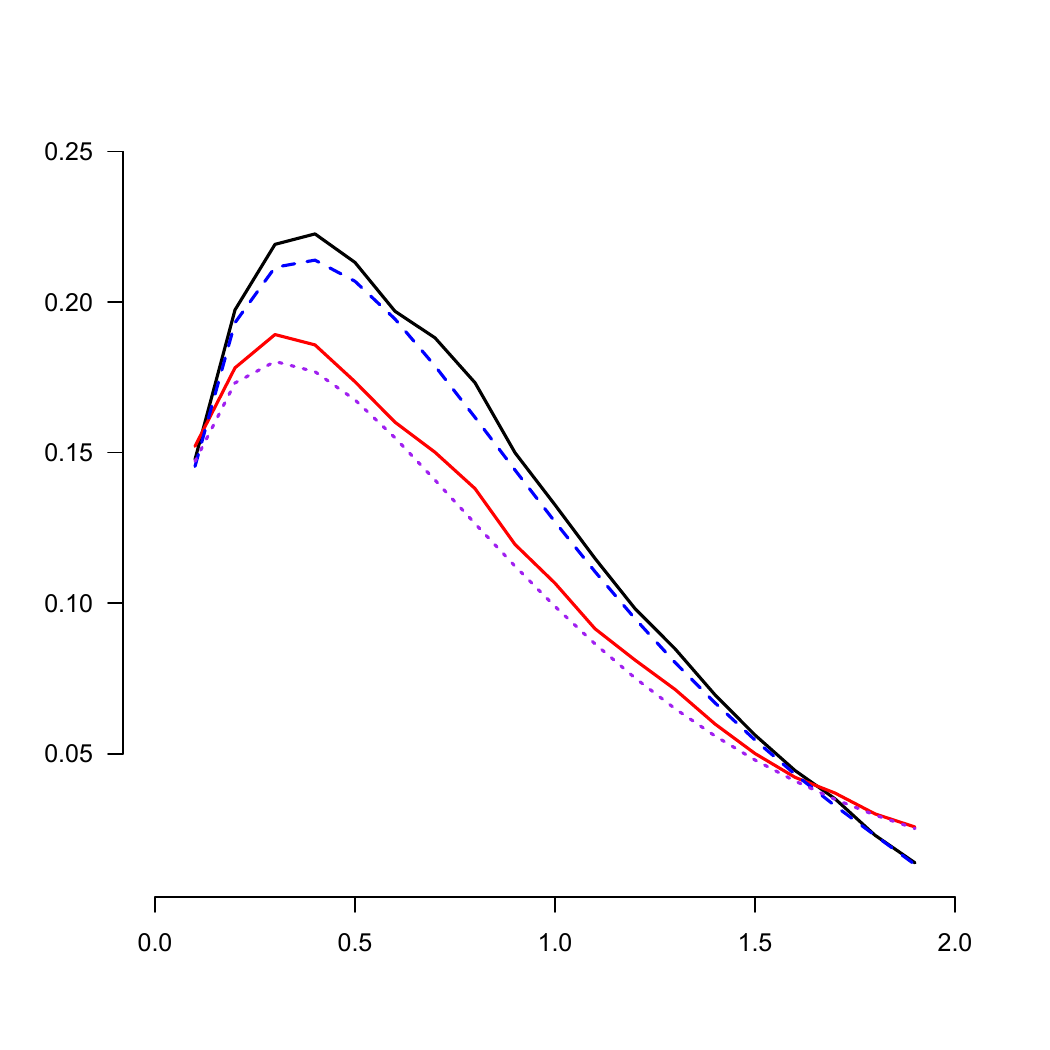}
\caption{}
\label{fig:CMLE10,000_exponential2}
\end{subfigure}
\begin{subfigure}[b]{0.4\textwidth}
\includegraphics[width=\textwidth]{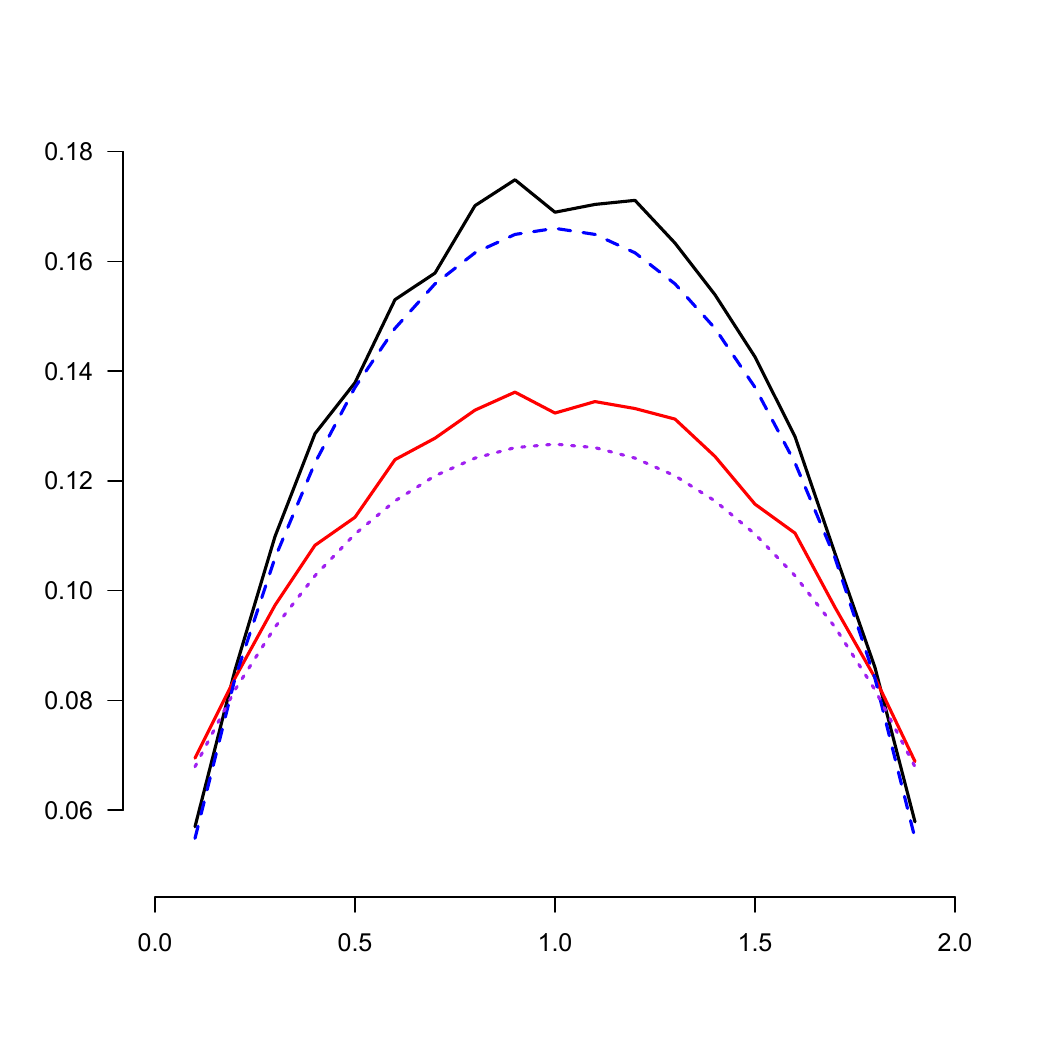}
\caption{}
\label{fig:LS10,000_MLE_uniform2}
\end{subfigure}
\caption{(a) Simulated variances, times $n^{2/3}$, of the simple nonparametric LS estimator, minimizing (\ref{LS_criterion_IC2a}) (black solid curve) and the least squares estimator, minimizing (\ref{LS_criterion_IC}) (red), for $t_i=0.1,0.2,\dots,1.9$, linearly interpolated between values at the $t_i$ for the model of Example \ref{example1}. The blue dashed curve and purple dotted curves are the theoretical limit curves discussed in Section \ref{sec:LS} for the LS estimators, minimizing (\ref{LS_criterion_IC2a}) and (\ref{LS_criterion_IC}), respectively The simulated variances are based on $10,000$ simulations of samples of size $n=10,000$ for the truncated exponential distribution function $F_0$ on $[0,2]$ and the order statistics of the uniform distribution on $[0,2]^2$ as observation times. (b) The same comparison, but now for $F_0$ uniform on $[0,2]$.}
\label{figure:two_LS_estimators10,000}
\end{figure}

Our proof of the limit result Theorem \ref{th:limit_LS} is not complete. We still have to show that the ``off-diagonal terms'' (\ref{off-diag1}) and (\ref{off-diag2}) are $o_p(n^{-2/3})$, see Section \ref{appendix2}. In fact, terms of this type were shown to be $O_p(n^{-5/6})$ in a similar problem, analyzed in \cite{piet:24}. In Section \ref{section:smooth_functionals} we discuss the theory, needed for the completion of the proof.

The proof of Theorem \ref{th:limit_LS2} on the limit behavior of the second LS estimator is complete, and in fact very much like the proof for the limit behavior of the MLE in the current status model.

Both LS estimators lead to $\sqrt{n}$-consistent estimators of smooth functionals, but the asymptotic variance of the MLE attains the information lower bound, as proved in \cite{GeGr:99}, so cannot be beaten here. The influence function of the LS estimators is in fact different from the efficient influence function of the MLE, which can be interpreted as a conditional expectation, see Section \ref{section:smooth_functionals}.

\section{The least squares estimators}
\label{sec:LS}
In the characterization of the nonparametric MLE one can use the fact that the logarithms provide a natural logarithmic boundary, preventing values of the solution to leave the interval $[0,1]$. This is no longer true for the least squares estimate minimizing (\ref{LS_criterion_IC}) and for this reason we use Lagrange multipliers in its characterization.

Let, for a distribution function $F$, the process $W_{n,F}$ be defined by
\begin{align}
\label{def_W_F}
&W_{n,F}(t)\nonumber\\
&=\int_{u\le t}\{\d_0-F(u)\}\,d\Q_n(u,v,\d_0,\d_1)-\int_{u\le t}\{\d_1-\{F(v)-F(u)\}\}\,d\Q_n(u,v,\d_0,\d_1)\nonumber\\
&\quad+\int_{v\le t}\{\d_1-\{F(v)-F(u)\}\}\,d\Q_n(u,v,\d_0,\d_1)+\int_{v\le t}\{\d_0+\d_1-F(v)\}\,d\Q_n(u,v,\d_0,\d_1),
\end{align}
where $\Q_n$ is the empirical probability measure of the $(U_i,V_i,\dd_{i0},\dd_{i1})$.

The least squares estimate of $F_0$ has the following characterization.

\begin{lemma}
\label{lemma:fenchel}
Let the process $W_{n,F}$ be defined by (\ref{def_W_F}) and let the Lagrange multipliers $\l_{1,F}$ and $\l_{2,F}$ be defined by
\begin{align}
\label{lambda_1}
\l_{1,F}=-\int_{(u,v):F(u)=0\,\,\text{\rm or }F(v)=0}\,dW_{n,F}(u,v,\d_0,\d_1),
\end{align}
and
\begin{align}
\label{lambda_2}
\l_{2,F}=\int_{(u,v):F(u)=1\,\,\text{\rm or }F(v)=1}\,dW_{n,F}(u,v,\d_0,\d_1).
\end{align}

Then the distribution function $\hat F_n$ minimizes (\ref{LS_criterion_IC}) over all distribution functions $F$ on $\R_+$ if and only if the following conditions are satisfied
\begin{enumerate}
\item[(i)]
\begin{align*}
\l_{1,\hat F_n}+W_{n,\hat F_n}(t)\ge0,\qquad t\ge0,
\end{align*}
\item[(ii)]
\begin{align*}
\int \hat F_n(t)\,dW_{n,\hat F_n}(t)-\l_{2,\hat F_n}=0.
\end{align*}
\end{enumerate}
\end{lemma}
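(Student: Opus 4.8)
The argument is a convex-duality (Karush--Kuhn--Tucker) argument for a quadratic criterion over a convex set, following the pattern of the Fenchel-type characterizations in \cite{GrWe:92}. Write a fixed positive multiple of the criterion in the lemma as
\[
\psi_n(F)=\int\Bigl\{\bigl(\d_0-F(u)\bigr)^2+\bigl(\d_1-(F(v)-F(u))\bigr)^2+\bigl(1-F(v)-\d_2\bigr)^2\Bigr\}\,d\Q_n(u,v,\d_0,\d_1),
\]
with $\d_2=1-\d_0-\d_1$. For each fixed $(u,v)$ the integrand is a positive definite quadratic form in $\bigl(F(u),F(v)\bigr)$, with Hessian $2\begin{pmatrix}2&-1\\-1&2\end{pmatrix}$, so $\psi_n$ is convex on the convex set $\cS$ of distribution functions on $\R_+$. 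Consequently $\hat F_n\in\cS$ minimizes $\psi_n$ over $\cS$ if and only if the one-sided directional derivative at $\hat F_n$ in every feasible direction is nonnegative:
\[
\lim_{\ee\downarrow0}\ee^{-1}\Bigl\{\psi_n\bigl(\hat F_n+\ee(F-\hat F_n)\bigr)-\psi_n(\hat F_n)\Bigr\}\ \ge\ 0,\qquad F\in\cS.
\]
I would carry out this differentiation, collect the coefficients multiplying $(F-\hat F_n)(u)$ and $(F-\hat F_n)(v)$, and match them term by term against the four integrals in (\ref{def_W_F}); this identifies the derivative as $-2\int\{F(t)-\hat F_n(t)\}\,dW_{n,\hat F_n}(t)$, so that optimality of $\hat F_n$ over $\cS$ is equivalent to
\begin{align}
\label{star}
\int\{F(t)-\hat F_n(t)\}\,dW_{n,\hat F_n}(t)\ \le\ 0\qquad\text{for every }F\in\cS.
\end{align}

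The second step is to rewrite (\ref{star}) as conditions (i)--(ii) by treating the bound constraints $F\ge0$ and $F\le1$ (the content of ``$F\in\cS$'' beyond monotonicity) explicitly and introducing the associated Lagrange multipliers. A feasible perturbation $F-\hat F_n$ must be $\ge0$ wherever $\hat F_n=0$ and $\le0$ wherever $\hat F_n=1$, and the numbers $\l_{1,\hat F_n}$, $\l_{2,\hat F_n}$ of (\ref{lambda_1})--(\ref{lambda_2}) turn out to be exactly the masses that $dW_{n,\hat F_n}$ puts on the observations with $\hat F_n(U_i)=0$ or $\hat F_n(V_i)=0$, respectively with $\hat F_n(U_i)=1$ or $\hat F_n(V_i)=1$, i.e. the KKT multipliers of these two active constraints. (Because $V_i>U_i$ and $\hat F_n$ is nondecreasing, $\hat F_n(V_i)=0$ forces $\hat F_n(U_i)=0$ and $\hat F_n(U_i)=1$ forces $\hat F_n(V_i)=1$, which is why the regions in (\ref{lambda_1})--(\ref{lambda_2}) are written with ``or''.) For the ``if'' direction, assume (i) and (ii): for an arbitrary $F\in\cS$ one integrates $\int F\,dW_{n,\hat F_n}$ by parts, bounds $W_{n,\hat F_n}$ from below using (i) --- where $\hat F_n=0$ both $F$ and $\hat F_n$ contribute nothing, so the multiplier mass $\l_{1,\hat F_n}$ is genuinely available --- and uses (ii) to evaluate $\int\hat F_n\,dW_{n,\hat F_n}$; combining these estimates with the explicit form (\ref{lambda_1})--(\ref{lambda_2}) of the multipliers yields (\ref{star}), and hence $\psi_n(F)\ge\psi_n(\hat F_n)$ by convexity. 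For the ``only if'' direction one runs the same steps backwards: since $F\mapsto\int F\,dW_{n,\hat F_n}$ is affine, (\ref{star}) need only be tested on the extreme points of $\cS$, the degenerate distribution functions $1_{[s,\infty)}$, $s\ge0$, and on interior perturbations of $\hat F_n$ where the bound constraints are inactive; reading off the active-constraint multipliers from the resulting inequalities produces (i) and (ii).

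The main obstacle is the bookkeeping in this second step. In contrast with the current status model, where the analogous process telescopes, here each observation contributes a jump to $W_{n,F}$ both at its $u$- and at its $v$-coordinate (the four integrals in (\ref{def_W_F})), so the integrals (\ref{lambda_1})--(\ref{lambda_2}) are not simply values of $W_{n,\hat F_n}$ at threshold points, and one must keep careful track of which jumps are attributed to which region, as well as of left- versus right-continuity when integrating the step process $W_{n,\hat F_n}$ by parts; one must also check that the characterization is self-consistent even though $\l_{1,F}$ and $\l_{2,F}$ are defined through $W_{n,F}$, which depends on $F$. Once the multipliers are matched correctly the equivalence is routine convex duality, and strict convexity of $\psi_n$ in the vector $\bigl(F(U_i),F(V_i)\bigr)_{i=1}^n$ additionally shows that the minimizing values of $\hat F_n$ at the observation times are uniquely determined.
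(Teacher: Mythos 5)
Your proposal follows exactly the route the paper indicates for this (omitted) proof: the Fenchel/KKT duality condition for the convex quadratic criterion over the convex set of distribution functions, with the multipliers $\l_{1,F}$ and $\l_{2,F}$ accounting for the active bound constraints $F=0$ and $F=1$; in particular your identification of the one-sided Gateaux derivative with $-2\int\{F(t)-\hat F_n(t)\}\,dW_{n,\hat F_n}(t)$, obtained by collecting the coefficients of the perturbation at the $u$- and $v$-coordinates and matching them with the four integrals in (\ref{def_W_F}), is the correct and essential computation. The ``bookkeeping'' you defer --- checking that $\l_{1,\hat F_n}$ and $\l_{2,\hat F_n}$ coincide with the KKT multipliers of the constraints at $0$ and $1$ and that summation by parts of $\int\hat F_n\,dW_{n,\hat F_n}$ together with (i) encodes all the complementary-slackness conditions in the single equality (ii) --- does go through, so your argument is sound and is essentially the same as the paper's.
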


\begin{figure}[!ht]
\centering
\includegraphics[width=0.5\textwidth]{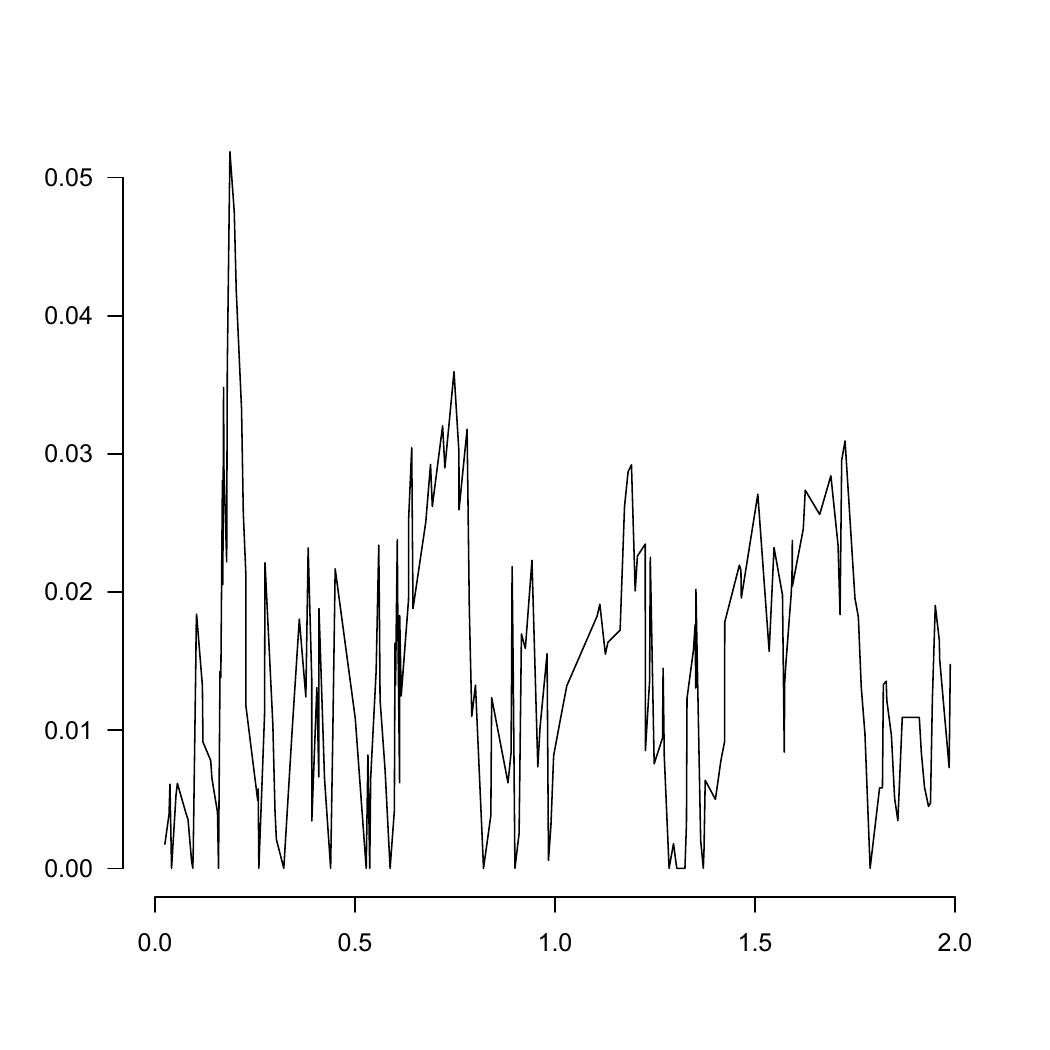}
\caption{The process $W_{n,\hat F_n}$ as a function of the $2n$ ordered observations $U_i$ and $V_i$ for Example \ref{example1} and $n=100$. For this example $\l_{1,\hat F_n}=0.003148$ and $\l_{2,\hat F_n}=0.014758$.}
\label{figure:W}
\end{figure}

The proof is based on the so-called Fenchel duality condition, as also used in \cite{piet_geurt:14} in the characterization of the nonparametric MLE for interval censoring (apart from the Lagrange multipliers in the present case) and is therefore omitted. A picture of the process $W_{n,\hat F_n}$ for $n=100$ is given in Figure \ref{figure:W} for Example \ref{example1} above. $W_{n,\hat F_n}$ touches zero at points just to the left of points where the least squares estimator $\hat F_n$ has a jump. The iterative convex minorant algorithm for computing $\hat F_n$ is based on this characterization.

The algorithm proceeds in the following way.  At each step we solve the isotonic regression problem without the Lagrange multipliers, under the restriction $y_1\le\dots\le y_{2n}$, where $y_i=F(T_i)$ represents the value of the distribution function at the $i$th order statistic of set of $U_i$'s and $V_i$'s (ties can also be handled, but we disregard this further complication here).   Typically, this will give values $y_i<0$ and $y_i>1$. If $y_i<0$ we put $y_i=0$ and if $y_i>1$ we put $y_i=1$. For this new value of the $y_i$ we compute the (preliminary) Lagrange multipliers $\l_1$ and $\l_2$, using (\ref{lambda_1}) and (\ref{lambda_2}). We repeat this procedure until the conditions (i) and (ii) of Lemma \ref{lemma:fenchel} are satisfied up to an accuracy of say $10^{-8}$. Convergence of this algorithm is very fast.

One can also compute the estimate by the interior point method, using a logarithmic barrier function. This algorithm is a kind of opposite of the iterative convex minnorant algorihm, since it converges to the solution from the interior of the parameter space, whereas the iterative convex minorant algorithm immediately hits the boundary in the first iteration step. Both algorithms were programmed for the present problem and give exactly the same result, though.

In order to state the limit result of the least squares estimator, minimizing (\ref{LS_criterion_IC}), we need the following notation.
Let, for $t_0\in(0,M)$,  $a_{t_0}$ be defined as the positive square root of

\begin{align}
\label{scale_IC}
a_{t_0}^2&=F_0(t_0)\{1-F_0(t_0)\}\bigl\{h_1(t_0)+h_2(t_0)\bigr\}\nonumber\\
&\qquad+\int_{v=t_0}^M\{F_0(v)-F_0(t_0)\}\left[1-\{F_0(v)-F_0(t_0)\}\right]\,h(t_0,v)\,dv\nonumber\\
&\qquad+\int_{u=0}^{t_0}\{F_0(t_0)-F_0(u)\}\left[1-\{F_0(t_0)-F_0(u)\}\right]\,h(u,t_0)\,du\nonumber\\
&\qquad+2F_0(t_0)\int_{v=t_0}^M\{F_0(v)-F_0(t_0)\}\,h(t_0,v)\,dv\nonumber\\
&\qquad+2\{1-F_0(t_0)\}\int_{u=0}^{t_0}\{F_0(t_0)-F_0(u)\}\,h(u,t_0)\,du,
\end{align}
and let $b_{t_0}$ be defined by:
\begin{align}
\label{drift_IC}
b_{t_0}&=h_1(t_0)+h_2(t_0),
\end{align}
where $h_1$ and $h_2$ are the marginals of $h$.
Then we have the following result of which the proof is sketched in the Appendix. 

\begin{theorem}
\label{th:limit_LS}
Let $F_0$ have a continuous strictly positive density $f_0$ on $[0,M]$, where $f_0$ stays away from zero,  and let $(U,V)$ be the order statistics of an absolutely  continuous distribution. We assume that $(U,V)$ has a a positive continuous density $h$ on
\begin{align*}
S=\{(u,v):0\le u<v\le M\},
\end{align*}
staying away from zero, with first and second marginals $h_1$ and $h_2$,  respectively, and with bounded  partial derivatives. We assume that $X_i$ is independent of $(U_i,V_i)$. 

Let $a_{t_0}$ and $b_{t_0}$ be defined by (\ref{scale_IC}) and (\ref{drift_IC}), respectively. Moreover let, for $t_0\in(0,M)$,  $\s_{t_0}$ be defined by
\begin{align}
\label{sigma_{t_0}}
\s_{t_0}=\left(a_{t_0}f_0(t_0)/b_{t_0}\right)^{2/3},\qquad t_0\in(0,M).
\end{align}
Then we get at a fixed point $t_0\in(0,M)$ for the LS estimate $\hat F_n$, minimizing
\begin{align}
\label{LS_criterion_IC2}
&\sum_{i=1}^n \left\{\left\{F(U_i)-\dd_{i0}\right\}^2+ \left\{F(V_i)-F(U_i)-\dd_{i1}\right\}^2+\left\{F(V_i)-\dd_{i0}-\dd_{i1})\right\}^2\right\}
\end{align}
over all distribution functions $F$:
\begin{align*}
n^{1/3}\left\{\hat F_n(t_0)-F_0(t_0)\right\}/\s_{t_0}\stackrel{{\cal D}}\longrightarrow Z,
\end{align*}
where $Z$ is the argmin of $t\mapsto W(t)+t^2$, and $W$ is standard two-sided Brownian motion.
\end{theorem}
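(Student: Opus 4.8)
The plan is to follow the now-standard route for cube-root asymptotics of isotonic-type estimators (as in \cite{piet:96} and \cite{piet_geurt:14}), using the characterization of Lemma \ref{lemma:fenchel} together with a localization argument at the fixed point $t_0$. First I would pass from the Fenchel conditions (i) and (ii) to a ``switching relation'': the event $\{\hat F_n(t_0)>a\}$ is equivalent, up to boundary effects handled by the Lagrange multipliers $\l_{1,\hat F_n},\l_{2,\hat F_n}$ (which are $o_p(n^{-2/3})$ and hence asymptotically negligible near an interior point $t_0$ where $0<F_0(t_0)<1$), to a statement that the minimizer over $t$ of the localized process $W_{n,\hat F_n}(t)$ minus a linear term crosses the level corresponding to $a$. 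Concretely, one writes $\hat F_n(t_0)-F_0(t_0)$ in terms of the location of the minimum of a process of the form $t\mapsto Z_n(t)+d_n(t)$, where $Z_n$ is a centered empirical-process term and $d_n$ is a deterministic drift coming from $F_0\neq \hat F_n$.

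Second I would establish the two ingredients of the cube-root limit theorem (Kim–Pollard type): a \emph{local quadratic drift} and a \emph{local Brownian scaling} of the noise. For the drift, one Taylor-expands the deterministic part of $W_{n,\hat F_n}$ around $t_0$; since the first-order term vanishes at the truth (this is exactly the score equation being zero at $F_0$ in the limit) and $F_0$ has a continuous positive density, the leading term is quadratic in $t-t_0$ with coefficient proportional to $f_0(t_0)\,b_{t_0}$, where $b_{t_0}=h_1(t_0)+h_2(t_0)$ as in (\ref{drift_IC}) — this is where the marginals $h_1,h_2$ enter, since the informative contributions to $W_{n,F}$ at a point $t$ are the ``$u\le t$'' and ``$v\le t$'' pieces evaluated at the boundary $u=t$ or $v=t$. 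For the noise, one computes the variance of the increments of the empirical term in $W_{n,\hat F_n}$ over a window of width $h$ around $t_0$; after replacing $\hat F_n$ by $F_0$ (justified by the consistency Lemma \ref{lemma:consistency} plus an $n^{1/3}$-rate bound obtained by a standard modulus-of-continuity/peeling argument), the variance is asymptotically $a_{t_0}^2\cdot h$, with $a_{t_0}^2$ exactly the five-term expression (\ref{scale_IC}): the first term $F_0(t_0)\{1-F_0(t_0)\}\{h_1(t_0)+h_2(t_0)\}$ is the ``current-status-like'' variance at $t_0$ from each marginal, the two integral terms with $F_0(v)-F_0(t_0)$ and $F_0(t_0)-F_0(u)$ come from the interval-membership indicator $\dd_1$ contributing on intervals straddling $t_0$, and the two cross terms with factors $2F_0(t_0)$ and $2\{1-F_0(t_0)\}$ are the covariances between the $\dd_0$/$\dd_2$ parts and the $\dd_1$ part. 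One then invokes a functional CLT to get that the localized, rescaled process $n^{2/3}W_{n,\hat F_n}(t_0+n^{-1/3}s)$ (after centering) converges to $a_{t_0}W(s)+\tfrac12 f_0(t_0)b_{t_0}s^2$ in the Skorokhod topology on compacta, uniformly enough to apply the argmax continuous-mapping theorem of Kim–Pollard.

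Third, applying that continuous-mapping (argmin) theorem and rescaling time and space by Brownian self-similarity ($W(cs)\stackrel{d}{=}c^{1/2}W(s)$), the argmin of $a_{t_0}W(s)+\tfrac12 f_0(t_0)b_{t_0}s^2$ is distributed as a constant times $\operatorname{argmin}_t\{W(t)+t^2\}$; matching the constant gives precisely $\s_{t_0}=(a_{t_0}f_0(t_0)/b_{t_0})^{1/3}$ as in (\ref{sigma_{t_0}}), and one concludes $n^{1/3}\{\hat F_n(t_0)-F_0(t_0)\}/\s_{t_0}\stackrel{\mathcal D}{\longrightarrow}Z$. Tightness of the argmin, needed to rule out mass escaping to infinity, follows from the quadratic drift dominating the noise outside a large window, again a routine peeling bound.

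The main obstacle — and the reason the authors themselves flag the proof as incomplete — is the step of replacing $\hat F_n$ by $F_0$ inside $W_{n,\hat F_n}$ in the drift and noise computations, i.e.\ showing that the ``off-diagonal'' remainder terms (\ref{off-diag1}) and (\ref{off-diag2}), which involve products of $\hat F_n-F_0$ evaluated at $u$ and at $v$ integrated against the empirical measure, are $o_p(n^{-2/3})$. Because the observation intervals $[U_i,V_i]$ can be arbitrarily short (the non-separated case), $\hat F_n(v)-\hat F_n(u)$ need not be well-controlled even where $\hat F_n$ is globally close to $F_0$, and the usual current-status argument — which treats $U$ and $V$ separately — does not directly apply. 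The intended remedy is the smooth-functional / integral-equation machinery of Section \ref{section:smooth_functionals}: one represents the relevant linear functionals of $\hat F_n-F_0$ via a kernel solving an integral equation, exploits the extra smoothness of $h$ (bounded second partials) to gain the analogue of the $O_p(n^{-5/6})$ bound obtained in the related problem of \cite{piet:24}, and thereby closes the gap. Everything else in the argument is, modulo bookkeeping, a transcription of the known current-status proof with the richer score process $W_{n,F}$ of (\ref{def_W_F}).
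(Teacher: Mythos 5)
Your proposal follows essentially the same route as the paper's own argument: localize at $t_0$, split the characterizing process into a noise part with $F_0$ substituted (whose rescaled increments converge to $a_{t_0}W$ by the trinomial variance/covariance computation you describe) plus a quadratic drift with coefficient built from $f_0(t_0)$ and $h_1(t_0)+h_2(t_0)$, plus the off-diagonal remainders (\ref{off-diag1})--(\ref{off-diag2}), and then conclude by Brownian scaling; the only cosmetic difference is that you phrase the final step as a Kim--Pollard argmin/switching argument while the paper reads off the slope of the greatest convex minorant of the self-induced cusum diagram, which is equivalent. You also correctly identify, as the paper itself does, that the genuinely open step is the $o_p(n^{-2/3})$ bound on the off-diagonal terms via the smooth-functional machinery of Section \ref{section:smooth_functionals}, so your sketch is neither more nor less complete than the paper's.
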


\begin{remark}
{\rm
The conditions on the observation density $h$ are similar to the conditions in \cite{GeGr:99} and used in the smooth functional theory needed in treating so-called ``off-diagonal terms'', see Section \ref{appendix2}.
}
\end{remark}

The least squares estimator minimizing (\ref{LS_criterion_IC}) is consistent, as the following lemma shows.

\begin{lemma}
\label{lemma:consistency}
Let $\hat F_n$ be the isotonic least squares estimator, minimizing (\ref{LS_criterion_IC}) over distribution functions $F$ under the conditions of Theorem \ref{th:limit_LS}. Then $\hat F_n$ converges almost surely to $F_0$ in the supremum metric.
\end{lemma}

\begin{proof}
We use ``Jewell's method'' (\cite{jewell:82}). Let the function $\psi$ be defined by
\begin{align}
\label{def_psi}
&\psi(F)\nonumber\\
&=\tfrac12\int\left\{\{F(u)-\d_0\}^2+\{F(v)-F(u)-\d_1\}^2+\{F(v)-\d_0-\d_1\}^2\right\}\,d\Q_n(u,v,\d_0,\d_1).
\end{align}
Since $\hat F_n$ minimizes (\ref{def_psi}) over $F$, we must have
\begin{align*}
\lim_{\e\downarrow0}\e^{-1}\left[\psi\bigl((1-\e)\hat F_n+\e F_0\bigr)-\psi(\hat F_n)\right]\ge0.
\end{align*}
Note that the limit exists by the convexity of the function $\psi$.
This means
\begin{align*}
&\int\{F_0(u)-\hat F_n(u)\}\{\hat F_n(u)-\d_0\}\,d\Q_n(u,v,\d_0,\d_1)\\
&\qquad+\int\{F_0(v)-F_0(u)-\hat F_n(v)+\hat F_n(u)\}\{\hat F_n(v)-\hat F_n(u)-\d_1\}\,d\Q_n(u,v,\d_0,\d_1)\\
&\qquad+\int\{F_0(v)-\hat F_n(u)\}\{\hat F_n(v)-\d_0-\d_1\}\,d\Q_n(u,v,\d_0,\d_1)\\
&\ge0.
\end{align*}
Proceeding as in Section 4 of Part 2 of \cite{GrWe:92}, using the Helly compactness theorem, we get from this, for a limit point $F$ of a subsequence of $\hat F_n$, where $h$ is the density of $(U,V)$ (see Theorem \ref{th:limit_LS}):
\begin{align*}
&\int\{F_0(u)-F(u)\}\{F(u)-F_0(u)\}\,h(u,v)\,du\,dv\\
&\qquad+\int\{F_0(v)-F_0(u)-F_(v)-F(v)+F(u)\}\\
&\qquad\qquad\qquad\qquad\qquad\qquad\cdot \{F(v)-F(u)-F_0(v)+F_0(u)\}\,h(u,v)\,du\,dv\\
&\qquad+\int\{F_0(v)-F(v)\}\{F(v)-F_0(v)\}\,h(u,v)\,du\,dv\\
&=-\int\{F(u)-F_0(u)\}^2\,h(u,v)\,du\,dv-\int\{F(v)-F(u)-F_0(v)+F_0(u)\}^2\,h(u,v)\,du\,dv\\
&\qquad-\int\{F(v)-F_0(v)\}^2\,h(u,v)\,du\,dv\\
&\ge0.
\end{align*}
By the assumptions on $F_0$ and $h$ this means that $F=F_0$. This implies that $\hat F_n$ converges uniformly to $F_0$.
\end{proof}

\begin{figure}[!ht]
\begin{subfigure}[b]{0.4\textwidth}
\includegraphics[width=\textwidth]{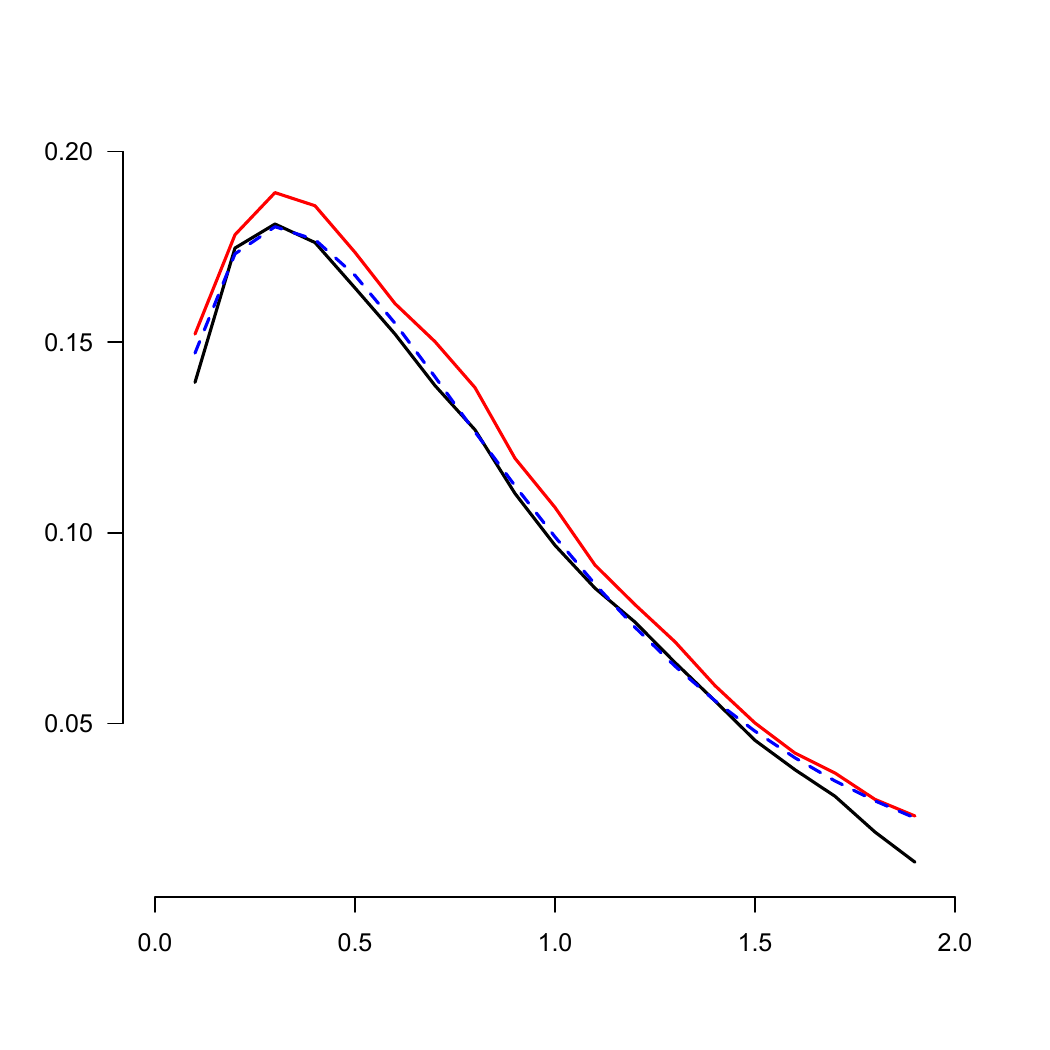}
\caption{}
\label{fig:CMLE10,000_exponential}
\end{subfigure}
\begin{subfigure}[b]{0.4\textwidth}
\includegraphics[width=\textwidth]{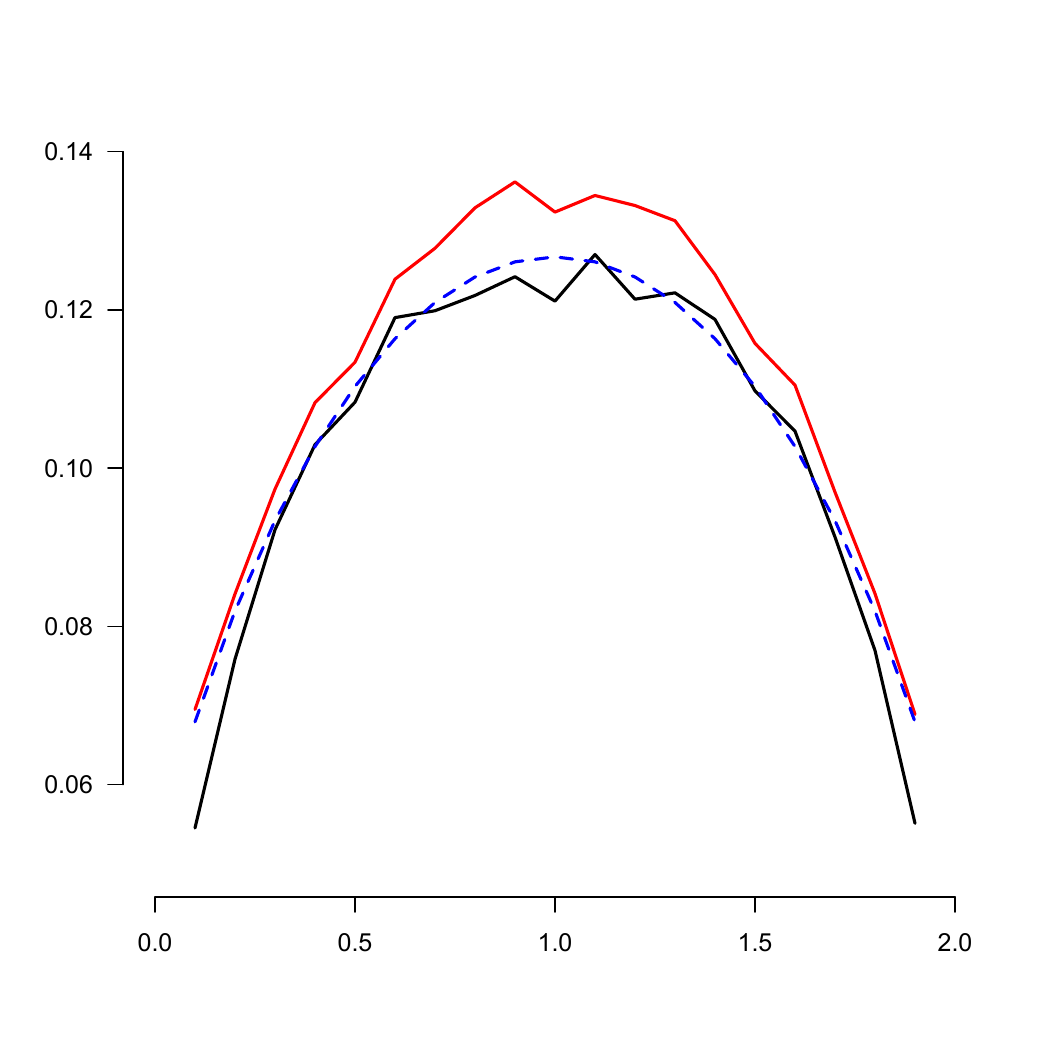}
\caption{}
\label{fig:LS10,000_MLE_uniform}
\end{subfigure}
\caption{(a) Simulated variances, times $n^{2/3}$, of the nonparametric MLE (black solid curve) and the least squares estimate (red), minimizing (\ref{LS_criterion_IC}), for $t_i=0.1,0.2,\dots,1.9$, linearly interpolated between values at the $t_i$ for the model of Example \ref{example1}. The blue dashed curve is the theoretical limit curve one obtains from Theorem \ref{th:limit_LS} below. The simulated variances are based on $10,000$ simulations of samples of size $n=10,000$ for the truncated exponential distribution function $F_0$ on $[0,2]$ and the order statistics of the uniform distribution on $[0,2]^2$ as observation times. (b) The same comparison, but now for $F_0$ uniform on $[0,2]$.}
\label{figure:MLE+LS_variances10,000}
\end{figure}

As an example, the asymptotic variance of the LS estimator is shown in Figure \ref{figure:MLE+LS_variances10,000} as the blue dashed curve for  in Example \ref{example1}. Remarkably, the curve of the variances of the MLE is even closer to the theoretical asymptotic curve than the curve for the variances of the least squares estimator for this sampe size of $n=10,000$. This should be compared with Figure \ref{figure:MLE+LS_variances}, where the sample size was $n=1000$ and the variances for the LS estimator were actually smaller that those of the nonparametric MLE. The faster decrease of the variances of the MLE than those of the LS when we increase the sample size from $1000$ to $10,000$ might give a hint of the possibility of a rate of $(n\log n)^{1/3}$ again, but the form of the curve is certainly not in accordance with the conjecture in \cite{GrWe:92}.

We here make some additional remarks about the proof of Theorem \ref{th:limit_LS}, discussed in the Appendix. Results of this type were proved for the nonparametric MLE in \cite{piet:96} and \cite{piet:24}. One has to show that the leading asymptotic behavior is provided by replacing the estimator by the underlying distribution function $F_0$ in the characterization, given in Lemma \ref{lemma:fenchel}. To this end, one has to show that certain terms, called the ``off-diagonal terms'',  are of lower order. Doing this is rather hard in our experience (\cite{piet:24} needed 53 pages), but presently there seems to be no other way. We give a sketch of these calculations in Section \ref{section:smooth_functionals} and in the Appendix. For similar calculations, see, e.g., Chapter 10 of \cite{piet_geurt:14} and \cite{piet:24}. The first result of the type of Theorem \ref{th:limit_LS} was for the current status model in Theorem 5.5 of Part 2 of \cite{GrWe:92}.

For the least squares estimator, minimizing (\ref{LS_criterion_IC2a}) we can define the following process.
\begin{align}
\label{def_W_F2}
&W_{n,F}^{(2)}(t)\nonumber\\
&=\int_{u\le t}\{\d_0-F(u)\}\,d\Q_n(u,v,\d_0,\d_1)+\int_{v\le t}\{\d_0+\d_1-F(v)\}\,d\Q_n(u,v,\d_0,\d_1),\,t\ge0,
\end{align}
where $\Q_n$ is the empirical probability measre of the $(U_i,V_i,\dd_{i0},\dd_{i1})$ and $F$ a distribution function. We now get the following characterization the least squares estimator, minimizing (\ref{LS_criterion_IC2a}).

\begin{lemma}
\label{lemma:fenchel2}
Let the process $W_{n,F}^{(2)}$ be defined by (\ref{def_W_F2}).
Then the distribution function $\hat F_n$ minimizes (\ref{LS_criterion_IC2a}) over all distribution functions $F$ on $\R_+$ if and only if the following conditions are satisfied
\begin{enumerate}
\item[(i)]
\begin{align*}
W^{(2)}_{n,\hat F_n}(t)\ge0,\qquad t\ge0,
\end{align*}
\item[(ii)]
\begin{align*}
\int \hat F_n(t)\,dW^{(2)}_{n,\hat F_n}(t)=0.
\end{align*}
\end{enumerate}
\end{lemma}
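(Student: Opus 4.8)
The plan is to establish Lemma~\ref{lemma:fenchel2} by the standard Fenchel duality / Karush--Kuhn--Tucker argument for convex optimization over the cone of nondecreasing functions, exactly parallel to the treatment of the nonparametric MLE in \cite{piet_geurt:14}, but simpler because criterion (\ref{LS_criterion_IC2a}) contains no difference $F(V_i)-F(U_i)$ and hence no interaction term forcing Lagrange multipliers. First I would write the criterion (\ref{LS_criterion_IC2a}) as $2n\,\phi^{(2)}(F)$ where
\begin{align*}
\phi^{(2)}(F)=\tfrac12\int\left\{\{F(u)-\d_0\}^2+\{F(v)-\d_0-\d_1\}^2\right\}\,d\Q_n(u,v,\d_0,\d_1),
\end{align*}
which is convex in $F$, and note that the class of distribution functions on $\R_+$ (restricted to the $2n$ order statistics $T_1\le\cdots\le T_{2n}$ of the $U_i$'s and $V_i$'s) is a convex cone intersected with the box $0\le y_1\le\cdots\le y_{2n}\le1$. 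Because the objective depends on $F$ only through these $2n$ values, the problem is finite-dimensional and a minimizer $\hat F_n$ exists.

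The core step is the variational inequality: since $\hat F_n$ minimizes the convex functional $\phi^{(2)}$ over the convex set of distribution functions, for every distribution function $F$ the one-sided directional derivative satisfies
\begin{align*}
\lim_{\e\downarrow0}\e^{-1}\left[\phi^{(2)}\bigl((1-\e)\hat F_n+\e F\bigr)-\phi^{(2)}(\hat F_n)\right]\ge0,
\end{align*}
the limit existing by convexity. Computing this derivative gives
\begin{align*}
\int\{F(u)-\hat F_n(u)\}\{\hat F_n(u)-\d_0\}\,d\Q_n+\int\{F(v)-\hat F_n(v)\}\{\hat F_n(v)-\d_0-\d_1\}\,d\Q_n\ge0,
\end{align*}
which, after rearranging the signs and recognizing the definition (\ref{def_W_F2}) of $W^{(2)}_{n,\hat F_n}$, can be rewritten as $\int F(t)\,dW^{(2)}_{n,\hat F_n}(t)\le\int\hat F_n(t)\,dW^{(2)}_{n,\hat F_n}(t)$ for all distribution functions $F$. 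I would then extract (i) and (ii) by the usual two-step specialization: taking $F$ to be the degenerate distribution functions $1_{[s,\infty)}$ (or their right-continuous analogues supported on the grid) for each grid point $s$ gives, via $\int 1_{[s,\infty)}(t)\,dW^{(2)}_{n,\hat F_n}(t)=W^{(2)}_{n,\hat F_n}(\infty)-W^{(2)}_{n,\hat F_n}(s-)$ together with $W^{(2)}_{n,\hat F_n}(\infty)=0$ (which follows from $\Q_n$ being a probability measure, since the integrand sums the three residuals $\d_0+\d_1+\d_2-1=0$ when both $u\le t$ and $v\le t$), the inequality $W^{(2)}_{n,\hat F_n}(t)\ge0$ for all $t$; this is (i). Feeding (i) back into the variational inequality with $F=\hat F_n$ on the left and using $0\le\hat F_n\le1$ shows $\int\hat F_n\,dW^{(2)}_{n,\hat F_n}$ is simultaneously $\le$ its own value and $\ge0$, and a complementary-slackness / integration-by-parts argument (the measure $dW^{(2)}_{n,\hat F_n}$ is nonnegative by (i) with total mass $0$, and $\hat F_n$ is constant on the flat stretches where that measure can charge) pins it to $0$; this is (ii). Conversely, (i) and (ii) imply the variational inequality for every distribution function $F$, hence minimality, by running the same computation backwards: $\int\hat F_n\,dW^{(2)}-\int F\,dW^{(2)}=-\int F\,dW^{(2)}\le0$ wait—rather, $\int(\hat F_n-F)\,dW^{(2)}=-\int F\,dW^{(2)}$ and since $F\ge0$ and $dW^{(2)}\ge0$ by (i) this is $\le0$, establishing the inequality in the direction needed for the converse.

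The main obstacle, as usual in these Fenchel-duality characterizations, is the careful bookkeeping at the boundary of the box constraint $y_i\ge0$ and $y_i\le1$ and the handling of the Stieltjes integrals $\int\hat F_n\,dW^{(2)}_{n,\hat F_n}$ with a step-function integrator—in particular verifying that the measure $dW^{(2)}_{n,\hat F_n}$ is supported only on grid points just to the left of the jumps of $\hat F_n$, so that the complementary-slackness identity (ii) is exactly equivalent to the more intuitive statement that $W^{(2)}_{n,\hat F_n}$ vanishes at those points. Since this is word-for-word the argument already carried out for the nonparametric MLE in \cite{piet_geurt:14} and for the related estimator of \cite{eszuijlen:96} (with the additional simplification here that no Lagrange multipliers $\l_1,\l_2$ appear because the criterion splits additively over $U_i$ and $V_i$ and the logarithmic-barrier effect is absent), and the proof of the analogous Lemma~\ref{lemma:fenchel} was itself omitted in the text, I would likewise state that the proof is a routine adaptation of the Fenchel-duality argument in \cite{piet_geurt:14} and omit the details, merely indicating the two key computations: $W^{(2)}_{n,\hat F_n}(\infty)=0$ and the directional-derivative identity above.
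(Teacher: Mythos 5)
Your overall route --- convexity of the criterion, the one-sided directional derivative $\lim_{\e\downarrow0}\e^{-1}[\phi^{(2)}((1-\e)\hat F_n+\e F)-\phi^{(2)}(\hat F_n)]\ge0$, rewriting it as $\int F\,dW^{(2)}_{n,\hat F_n}\le\int\hat F_n\,dW^{(2)}_{n,\hat F_n}$, and specializing to indicator directions --- is exactly the ``familiar lines'' the paper has in mind, and your identification of the directional derivative with $-\int\{F(t)-\hat F_n(t)\}\,dW^{(2)}_{n,\hat F_n}(t)$ is correct. But two of the steps you lean on are false as stated. First, $W^{(2)}_{n,F}(\infty)=0$ is \emph{not} an algebraic consequence of $\Q_n$ being a probability measure: the total increment is $n^{-1}\sum_i\{2\d_{i0}+\d_{i1}-F(U_i)-F(V_i)\}$, and the two residuals $\d_0-F(u)$ and $\d_0+\d_1-F(v)$ appearing in (\ref{def_W_F2}) do not telescope to $\d_0+\d_1+\d_2-1$. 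The identity $W^{(2)}_{n,\hat F_n}(\infty)=0$ holds only at the minimizer and is itself one of the optimality conditions; you obtain it by perturbing $\hat F_n$ by small constants in \emph{both} directions (feasible because only the $2n$ grid values enter the criterion and the unconstrained isotonic fit of $0$--$1$ responses automatically stays in $[0,1]$), or equivalently from the fact that the last vertex of the cusum diagram lies on its greatest convex minorant. Without establishing this first, your derivation of (i) from $F=1_{[s,\infty)}$ has no anchor.

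Second, condition (i) says the \emph{cumulative} process $W^{(2)}_{n,\hat F_n}$ is nonnegative, not that the measure $dW^{(2)}_{n,\hat F_n}$ is nonnegative; its point masses $\d_{i0}-\hat F_n(U_i)$ and $\d_{i0}+\d_{i1}-\hat F_n(V_i)$ are negative for a positive fraction of the indices. This vitiates both your complementary-slackness step for (ii) and your converse, where you in fact land on $\int\{\hat F_n(t)-F(t)\}\,dW^{(2)}_{n,\hat F_n}(t)\le0$ --- the wrong sign, since minimality of $\hat F_n$ requires this quantity to be $\ge0$ for every distribution function $F$. The repair is Abel summation (integration by parts) in the other variable: for any distribution function $F$ one has $\int F\,dW^{(2)}_{n,\hat F_n}=F(\infty)\,W^{(2)}_{n,\hat F_n}(\infty)-\int W^{(2)}_{n,\hat F_n}(t-)\,dF(t)\le0$, using (i), the nonnegativity of the measure $dF$, and $W^{(2)}_{n,\hat F_n}(\infty)=0$; combined with (ii) this yields the variational inequality in the correct direction, and applied to $F=\hat F_n$ together with the reverse bound it pins $\int\hat F_n\,dW^{(2)}_{n,\hat F_n}$ to zero. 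With these two corrections your argument closes; alternatively, the entire lemma follows from Lemma \ref{lem:charMLECS} applied to the pooled isotonic regression of the $2n$ zero--one responses, which is precisely the cusum-diagram description the paper records immediately after the statement.
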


The proof follows familiar lines and is therefore omitted. Alternatively, one can say that the esitimator is the left-continuous slope of the greatest convex minorant of the cusum diagram starting at $(0,0)$ and running through the ($2n$) points
\begin{align*}
\left(\sum 1_{\{U_i\le t\}}+\sum 1_{\{V_i\le t\}},\sum\dd_{i0}1_{\{U_i\le t\}}+\sum(\dd_{i0}+\dd_{i1})1_{\{V_i\le t\}}\right),\qquad t\ge0.
\end{align*}
Since we work with observations from absolutely continuous distributions, we get for the left coordinates  just $1,2,\dots$.

 So, in constructing the cusum diagram, we run through all $2n$ observation points; if we meet a point $U_i$, we add $\d_{i0}$ to the cusum diagram,  if we meet a point $V_i$ we add $\d_{i0}+\d_{i1}$ to the diagram. In the latter case we only record whether the unobservable variable lies to the left of $V_i$ (then $\d_{i0}+\d_{i1}=1$) or to the right of $V_i$ (then $\d_{i0}+\d_{i1}=0$), so we do not use the information it lies between $U_i$ and $V_i$ if this is the case.

For the isotonic least squares estimator, minimizing (\ref{LS_criterion_IC2a}) we have the following result.
\begin{theorem}
\label{th:limit_LS2}
Let the assumptions of Theorem \ref{th:limit_LS} be satisfied and let $a_{t_0}'$ and $b_{t_0}'$ be defined by
\begin{align}
\label{scale_IC2}
a_{t_0}'&=\left\{F_0(t_0)\{1-F_0(t_0)\}\bigl\{h_1(t_0)+h_2(t_0)\bigr\}\right\}^{1/2},
\end{align}
and
\begin{align}
\label{drift_IC2}
b_{t_0}'&=\left\{h_1(t_0)+h_2(t_0)\right\}/2,
\end{align}
where $h_1$ and $h_2$ are the marginals of $h$.
Moreover let, for $t_0\in(0,M)$,  $\s_{t_0}$ be defined by
\begin{align}
\label{sigma_{t_0}2}
\s_{t_0}'=\left(a_{t_0}'f_0(t_0)/b_{t_0}'\right)^{2/3},\qquad t_0\in(0,M).
\end{align}
Then we get at a fixed point $t_0\in(0,M)$ for the LS estimate $\hat F_n$, minimizing
\begin{align}
\label{LS_criterion_IC2b}
&\sum_{i=1}^n \left\{\left\{F(U_i)-\dd_{i0}\right\}^2+\left\{F(V_i)-\dd_{i0}-\dd_{i1})\right\}^2\right\}
\end{align}
over all distribution functions $F$:
\begin{align*}
n^{1/3}\left\{\hat F_n(t_0)-F_0(t_0)\right\}/\s_{t_0}'\stackrel{{\cal D}}\longrightarrow Z,
\end{align*}
where $Z$ is the argmin of $t\mapsto W(t)+t^2$, and $W$ is standard two-sided Brownian motion.
\end{theorem}

The proof is given in the Appendix.
 Both the computation and the proof of Theorem \ref{th:limit_LS2}  are easier than for the estimator minimizing (\ref{LS_criterion_IC2}), but the performance of the estimator, minimizing (\ref{LS_criterion_IC2a}) is clearly inferior to that of the estimator, minimizing (\ref{LS_criterion_IC}), see Figure \ref{figure:two_LS_estimators10,000}.
 
 It is instructive to compare the results of Theorems \ref{th:limit_LS} and \ref{th:limit_LS2} with the limit result for the MLE (which is also an LS estimator with equal constant weights) for the current status model. The result is given as Theorem 3.7 in \cite{piet_geurt:14} on p.\ 68. In that case we have
 \begin{align*}
 n^{1/3}\left\{\hat F_n(t_0)-F_0(t_0)\right\}/\s \stackrel{{\cal D}}\longrightarrow Z,
 \end{align*}
 where $Z=\text{argmin}_t\{W(t)+t^2\}$ and $\s=(a/b)^{2/3}$, with
 \begin{align*}
 a=\left\{F_0(t_0)\{1-F_0(t_0)\}g(t_0)f_0(t_0)\right\}^{1/2}\qquad\text{ and }\qquad b=\tfrac12g(t_0).
 \end{align*}
 Theorem \ref{th:limit_LS2} is completely similar, with $g(t_0)$ replaced by $h_1(t_0)+h_2(t_0)$ and  Theorem \ref{th:limit_LS} also has this structure, but in that case $g(t_0)$ is replaced by $2\{h_1(t_0)+h_2(t_0)\}$ and $a$ is replaced by a more complicated scale factor, reflecting the covariances of the trinomial distribution of $\dd_{i0},\,\dd_{i1}$ and $\dd_{i2}$.
 
 \begin{figure}[!ht]
\begin{subfigure}[b]{0.45\textwidth}
\includegraphics[width=\textwidth]{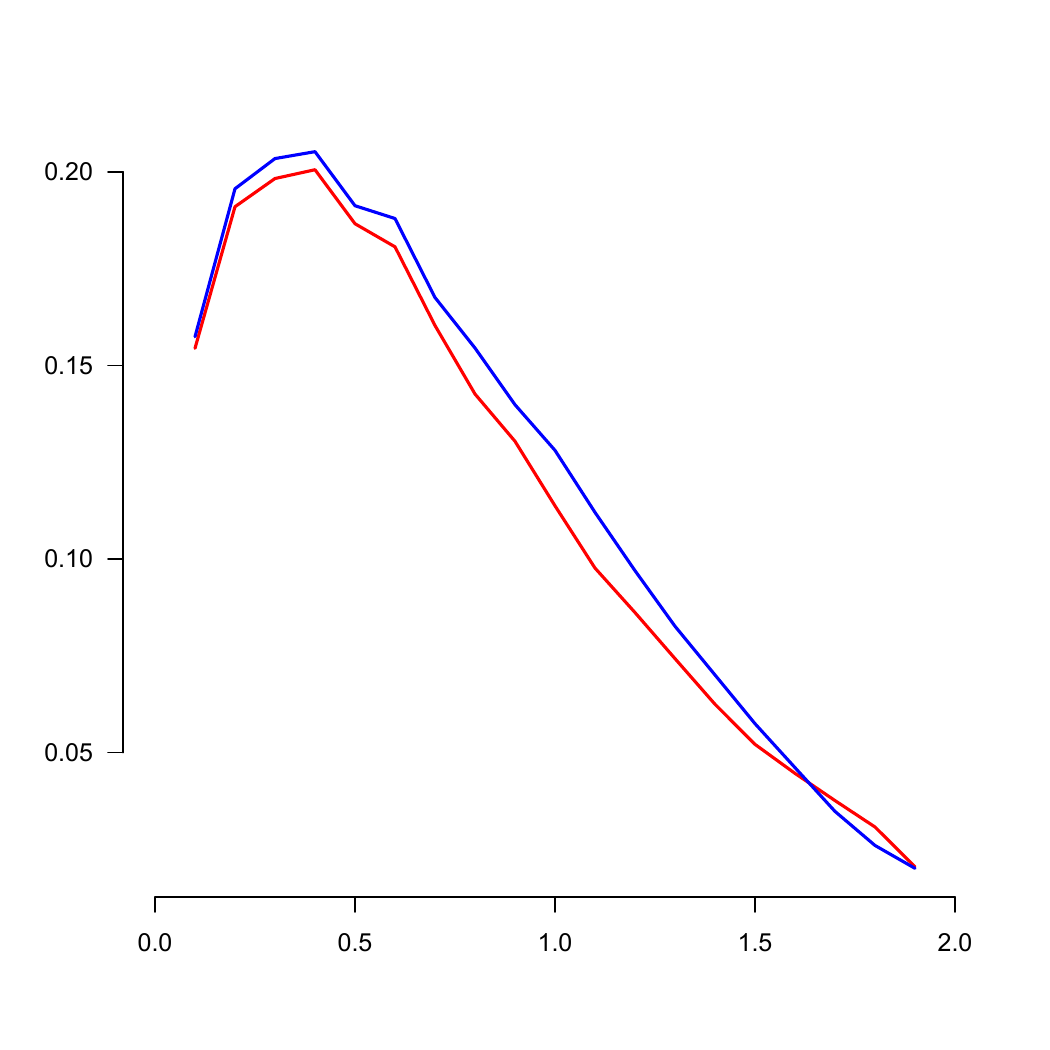}
\caption{}
\label{fig:CMLE1000_exponential2}
\end{subfigure}
\begin{subfigure}[b]{0.45\textwidth}
\includegraphics[width=\textwidth]{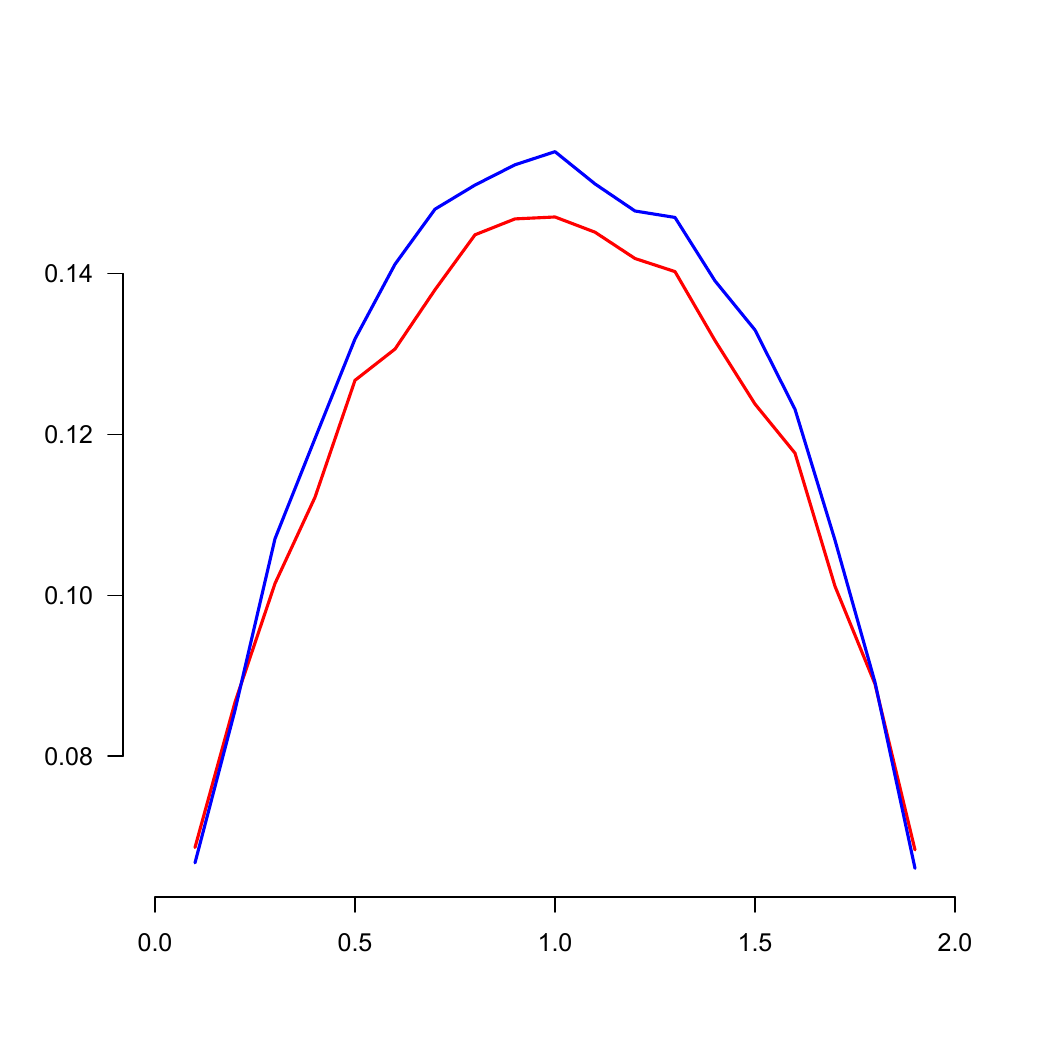}
\caption{}
\label{fig:LS1000_MLE_uniform2}
\end{subfigure}
\caption{(a) Simulated variances, times $n^{2/3}$, of the LS estimator, minimizing (\ref{LS_criterion_IC}) (red curve) and the LS estimator minimizing (\ref{LS_criterion_IC2}) (blue), for $t_i=0.1,0.2,\dots,1.9$, linearly interpolated between values at the $t_i$ for the model of Example \ref{example1}. The simulated variances are based on $10,000$ simulations of samples of size $n=1000$ for the truncated exponential distribution function $F_0$ on $[0,2]$ and the order statistics of the uniform distribution on $[0,2]^2$ as observation times. (b) The same comparison, but now for $F_0$ uniform on $[0,2]$.}
\label{figure:LS+LSweighted_variances}
\end{figure}

We chose equal weights, because the MLE for current status data is equivalent to an LS estimator with equal weights. But it is also possible to introduce other weights for the LS estimators. We experimented a bit with weights of the form $w_{i0}=1/U_i$, $w_{i1}=1/(V_i-U_i)$ and $w_{i2}=1/(2-V_i)$, $i=1,\dots,n$, for our examples of the truncated exponential distribution and the Uniform distribution on $[0,2]$, giving smaller intervals more weight.  Our findings were that the iterative procedure for computing the LS estimator, minimizing
 \begin{align}
 \label{LS_criterion_IC_weights}
&\sum_{i=1}^n \left\{\left\{F(U_i)-\dd_{i0}\right\}^2w_{i0}+ \left\{F(V_i)-F(U_i)-\dd_{i1}\right\}^2w_{i1}
+\left\{1-F(V_i)-\dd_{i2}\right\}^2w_{i2}\right\}
\end{align}
lasted longer and became less stable. Also, the results for the algorithm with equal weights were slightly better, for sample size $n=1000$, see Figure \ref{figure:LS+LSweighted_variances}.

 \section{Smooth functional theory}
\label{section:smooth_functionals}
The smooth functional theory for the nonparametric MLE in the separated case is discussed in \cite{piet_geurt:14} and in \cite{piet:96} and for the non-separated case of interval censoring in \cite{GeGr:99}. We give the treatment here for the LS estimator, minimizing (\ref{LS_criterion_IC}).
We define, for a distribution function $F$, the function $\th_{F}$ by:
\begin{align}
\label{theta_F}
\theta_{F}(u,v,\d_0,\d_1)&=\bigl\{\d_0-F(u)\bigr\}\f_{F}(u)+\bigl\{\d_1-F(v)+F(u)\bigr\}\bigl\{\f_{F}(v)-\f_{F}(u)\bigr\}\nonumber\\
&\qquad\qquad\qquad\qquad\qquad\qquad\qquad\qquad-\bigl\{\d_2-(1-F(v)\}\f_{\hat F_n}(v).
\end{align}
where $\f_{F}$ is either an integrated score function
\begin{align}
\label{def_phi_F}
 \f_{F}(x)=\int_{y\in[0,x]}a(y)\,dF(y),
 \end{align}
 or a function belonging to the closure (in an $L_2$-sense) of the space of such functions (the latter will actually be the case for the functions $\f_{\hat F_n}$, found below for the LS estimator)

Note that, if $\f_{\hat F_n}$ is a right-continuous piecewise constant function, constant on the same intervals as $\hat F_n$, and satisfying $\f_{\hat F_n}(t)=0$  if $\hat F_n(t)=0$ or $\hat F_n(t)=1$, we have:
\begin{align}
\label{LS_score_condition}
&\int\theta_{\hat F_n}(u,v,\d_0,\d_1)\,d\Q_nu,v,\d_0,\d_1)\nonumber\\
&=\int\left[\d_0-\hat F_n(u)-\bigl\{\d_1-\hat F_n(v)+\hat F_n(u)\bigr\}\right]\f_{\hat F_n}(u)\,d\Q_n(u,v,\d_0,\d_1)\nonumber\\
&\qquad\qquad+\int\left[\d_1-\hat F_n(v)+\hat F_n(u)-\d_2+\{1-\hat F_n(v)\bigr\}\right]\f_{\hat F_n}(v)\,d\Q_nu,v,\d_0,\d_1)\nonumber\\
&=0.
\end{align}
We also have:
\begin{align*}
\int\th_{F_0}\,dQ_0=0,
\end{align*}
for the underlying probability measure $Q_0$.

Note that $\th_F$ is similar to, but different from, the canonical gradient or ``efficient influence function'' $\tilde\th=\tilde\th_{F,H}$ in \cite{GeGr:99} (see, e.g., the discussion preceding Theorem 2.1 in \cite{GeGr:99}). The theory in \cite{GeGr:99} shows that the information lower bound $\|\tilde\th\|^2_{Q_0}$, attained by the asymptotic variance of the MLE in estimating smooth functionals, cannot be improved upon, so we can only hope that the corresponding asymptotic variance $\|\th_{F_0}\|^2_{Q_0}$ of the LS estimator (see below) will be close to this information lower bound.

We have, using Fubini's theorem,
\begin{align*}
&\int \th_{\hat F_n}\,dQ_0\\
&=\int\bigl\{F_0(u)-\hat F_n(u)\bigr\}\f_{\hat F_n}(u)\,h_1(u)\,du+\int\bigl\{F_0(v)-\hat F_n(v)\bigr\}\f_{\hat F_n}(v)\,h_2(v)\,dv\\
&\qquad\qquad+\int\bigl\{F_0(v)-F_0(u)-\hat F_n(v)+\hat F_n(u)\bigr\}\bigl\{\f_{\hat F_n}(v)-\f_{\hat F_n}(u)\bigr\}\,h(u,v)\,du\,dv\\
&=\int_{x\in[0,M]}\left[\int_{x\le u}\f_{\hat F_n}(u)\,\{h_1(u)+h_2(u)\}\,du\right.\\
&\qquad\qquad\qquad\qquad\qquad\left.+\int_{u<x\le v}\bigl\{\f_{\hat F_n}(v)-\f_{\hat F_n}(u)\bigr\}h(u,v)\,du\,dv\right]\,d(\hat F_0-\hat F_n)(x).
\end{align*}
If we want to estimate a smooth functional of the type $\int\kappa_{F_0}(x)\,dF_0(x)$, we approximately want to accomplish for the inner part between brackets of this integral:
\begin{align}
\label{adjoint_eq}
&\int_{u\ge x}\f_{\hat F_n}(u)\,\{h_1(u)+h_2(u)\}\,du+\int_{u<x\le v}\bigl\{\f_{\hat F_n}(v)-\f_{\hat F_n}(u)\bigr\}\,h(u,v)\,du\,dv\nonumber\\
&=\k_{F_0}(x),
\end{align}
because we then get
\begin{align*}
\int \k_{F_0}\,d\bigl(\hat F_n-F_0\bigr)\approx \int\th_{\hat F_n}\,dQ_0=\int\th_{\hat F_n}\,d\bigl(Q_0-\Q_n\bigr),
\end{align*}
that is, we can represent the functional in the hidden space by an integral in the observation space which is $\sqrt{n}$ convergent and asymptotically normal.

Differentiating equation (\ref{adjoint_eq}) w.r.t.\ $x$, we get
\begin{align*}
&-\f_{\hat F_n}(x)\{h_1(x)+h_2(x)\}-\int_{u\le x}\bigl\{\f_{\hat F_n}(x)-\f_{\hat F_n}(u)\bigr\}h(u,x)\,du\\
&\qquad\qquad\qquad\qquad\qquad\qquad+\int_{v\ge x}\bigl\{\f_{\hat F_n}(v)-\f_{\hat F_n}(x)\bigr\}h(x,v)\,dv\\
&=\k'_{F_0}(x).
\end{align*}
Defining, as in \cite{GeGr:99}, $\f_{\hat F_n}$ by
\begin{align*}
\f_{\hat F_n}(x)=\int_{y\in(x,M]}a(y)\,d\hat F_n(y)
\end{align*}
instead of $\f_{\hat F_n}(x)=\int_{y\in[0,x]}a(y)\,d\hat F_n(y)$, and also allowing functions $\f_{\hat F_n}$ on the boundary of the space, we get the equation
\begin{align*}
&\f_{\hat F_n}(x)\{h_1(x)+h_2(x)\}+\int_{u\le x}\bigl\{\f_{\hat F_n}(x)-\f_{\hat F_n}(u)\bigr\}h(u,x)\,du\\
&\qquad\qquad\qquad\qquad\qquad\qquad-\int_{v\ge x}\bigl\{\f_{\hat F_n}(v)-\f_{\hat F_n}(x)\bigr\}h(x,v)\,dv\\
&=\k'_{F_0}(x).
\end{align*}

This leads to the matrix equation
\begin{align}
\label{matrix_eq}
&y_i\left\{\dd_i(h_1)+\dd_i(h_2)+\sum_{j<i}\dd_{ji}(h)+\sum_{j>i}\dd_{ij}(h)\right\}\nonumber\\
&=\k_{F_0}(t_{i+1})-\k_{F_0}(t_i)+\sum_{j<i}\dd_{ji}(h)y_j+\sum_{j>i}\dd_{ij}(h)y_j,\qquad i=1,\dots,m-1,
\end{align}
where the $t_i$ are the points of jump of $\hat F_n$ and $y_i=\phi_{\hat F_n}(t_i)$; $\phi_{\hat F_n}(t)=0$ is zero if $\hat F_n(t)=0$ or $\hat F_n(t)=1$.  This is analogous to (iii) of Theorem 3.1 on p.\ 646 of \cite{GeGr:99}. Here $\dd_i(h_j)$ and $\dd_{ij}(h)$ are defined by
\begin{align}
\label{delta_h_j}
\dd_i(h_j) =\int_{t_i}^{t_{i+1}}h_j(t)\,dt, i=1,\dots,m-1,\qquad j=1,2,
\end{align}
and
\begin{align}
\label{delta_h}
\dd_{ij}(h) =\int_{t_i}^{t_{i+1}}\int_{t_j}^{t_{j+1}} h(u,v)\,du\,dv,\qquad 1\le i<j\le m-1.
\end{align}

A prototype of a smooth functional is the functional for the mean
\begin{align*}
T_F:x\mapsto x-\int y\,dF(y).
\end{align*}
For this case we get the equation
\begin{align*}
&-\f_{\hat F_n}(x)\{h_1(x)+h_2(x)\}-\int_{u\le x}\bigl\{\f_{\hat F_n}(x)-\f_{\hat F_n}(u)\bigr\}h(u,x)\,du\\
&\qquad\qquad\qquad\qquad\qquad\qquad+\int_{v\ge x}\bigl\{\f_{\hat F_n}(v)-\f_{\hat F_n}(x)\bigr\}h(x,v)\,dv\\
&=1,\qquad x\in\{z\in(0,M):0<\hat F_n(z)<1\}
\end{align*}

As an example, for the situation that $F_0$ is the uniform distribution function on $[0,1]$ and $(U,V)$ is uniformly distributed on the upper triangle of the unit square with vertices $(0,0)$, $(0,1)$ and $(1,1)$ and, moreover, $\kappa_{F_0}$ is the mean functional, this gives the solution
\begin{align*}
\phi_{\hat F_n}(x)=\left\{\begin{array}{ll} 1/2,\qquad &x\in\{z\in(0,M):0<\hat F_n(z)<1\}\\
0,&\text{otherwise}.
\end{array}
\right.
\end{align*}

This yields as asymptotic variance for $m(\hat F_n)=\int x\,d\hat F_n(x)$, if $\hat F_n$ is the LS estimator, minimizing (\ref{LS_criterion_IC}):
\begin{align*}
&n\,\text{var}\left(m(\hat F_n)\right)\longrightarrow\int\theta_{F_0}(u,v,\d_0,\d_1)^2\,d\Q_0(u,v,\d_0,\d_1),\qquad n\to\infty,
\end{align*}
where
\begin{align*}
&\int\theta_{F_0}(u,v,\d_0,\d_1)^2\,d\Q_0(u,v,\d_0,\d_1)\\
&=\tfrac14\int\left[\d_0-F_0(u)-\d_2+(1-F_0(v))\right]^2\,dQ_0(u,v,\d_0,\d_1)\\
&=\tfrac14\int F_0(u)(1-F_0(u)\}h_1(u)\,du+\tfrac14\int_0^1F_0(v)(1-F_0(v)\}h_2(v)\,dv\\
&\qquad\qquad\qquad\qquad\qquad\qquad\qquad\qquad+\tfrac12\int F_0(u)\{1-F_0(v)\}h(u,v)\,du\,dv\\
&=\tfrac12\int_0^1u(1-u)(1-u)\,du+\tfrac12\int_0^1v(1-v)v\,dv+\int_{0<u<v<1}u(1-v)\,du\,dv\\
&=\frac18=0.125.
\end{align*}
So in this case we get the following result for the LS estimator:
\begin{align*}
\sqrt{n}\left\{m(\hat F_n)-m(F_0)\right\}\stackrel{{\cal D}}\longrightarrow N(0,\s^2),
\end{align*}
where
\begin{align*}
\s^2=\frac18.
\end{align*}

Other smooth functionals can be treated in a similar way, also giving the $\sqrt{n}$ behavior and asymptotic normality of these functionals of the LS estimator, which means that we can follow the analysis in \cite{piet:24} to show that the ``off-diagonal'' terms in Section \ref{appendix2}
\begin{align*}
\int_{u\in[t_0,t_0+n^{-1/3}t],\,v\ge u}\bigl\{F_0(v)-\hat F_n(v)\bigr\}\,dH(u,v).
\end{align*}
and
\begin{align*}
\int_{v\in[t_0,t_0+n^{-1/3}t],\,u\le v}\bigl\{F_0(u)-\hat F_n(u)\bigr\}\,dH(u,v).
\end{align*}
are of order $O_p(n^{-5/6})$.

On the other hand, the analysis of the behavior of the smooth functionals for the nonparametric MLE starts by defining
\begin{align}
\label{theta_F_MLE}
\tilde\theta_{\hat F_n}(u,v,\d_0,\d_1)&=\d_0\frac{\f_{\hat F_n}(u)}{\hat F_n(u)}+\d_1\frac{\f_{\hat F_n}(v)-\f_{\hat F_n}(u)}{\hat F_n(v)-\hat F_n(u)}-\d_2\frac{\f_{\hat F_n}(v)}{(1-\hat F_n(v)}\,,
\end{align}
where $\f_{\hat F_n}$ is defined by (\ref{def_phi_F}), but now for the nonparametric MLE $\hat F_n$. By Proposition 3.1 on p.\ 644 of \cite{GeGr:99} we have:
\begin{align}
\label{MLE_score_condition}
\int\tilde\th_{\hat F_n}\,d\Q_n=0,
\end{align}
(compare with (\ref{LS_score_condition})). Moreover, using Fubini's theorem again, we get
\begin{align*}
&\int \tilde\theta_{\hat F_n}\,dQ_0\\
&=\int F_0(u)\frac{\f_{\hat F_n}(u)}{\hat F_n(u)}\,h_1(u)\,du-\int\{1-F_0(v)\}\frac{\f_{\hat F_n}(v)}{1-\hat F_n(v)}\,h_2(v)\,dv\\
&\qquad\qquad\qquad\qquad\qquad\qquad+\int\bigl\{F_0(v)-F_0(u)\bigr\}\frac{\f_{\hat F_n}(v)-\f_{\hat F_n}(u)}{\hat F_n(v)-\hat F_n(u)}\,dH(u,v)\\
&=\int \{F_0(u)-\hat F_n(u)\}\frac{\f_{\hat F_n}(u)}{\hat F_n(u)}\,h_1(u)\,du
+\int\{F_0(v)-\hat F_n(v)\}\frac{\f_{\hat F_n}(v)}{1-\hat F_n(v)}\,h_2(v)\,dv\\
&\qquad+\int\bigl\{F_0(v)-F_0(u)-\hat F_n(v)+\hat F_n(u)\bigr\}\frac{\f_{\hat F_n}(v)-\f_{\hat F_n}(u)}{\hat F_n(v)-\hat F_n(u)}\,h(u,v)\,du\,dv\\
&=\int\left[\int_{x\le u}\frac{\f_{\hat F_n}(u)}{\hat F_n(u)}\,h_1(u)\,du+\int_{x\le v}\frac{\f_{\hat F_n}(v)}{1-\hat F_n(v)}\,h_2(v)\,dv\right.\\
&\qquad\qquad\qquad\qquad\qquad\qquad\left.+\int_{u<x\le v}\frac{\f_{\hat F_n}(v)-\f_{\hat F_n}(u)}{\hat F_n(v)-\hat F_n(u)}\,h(u,v)\,du\,dv\right]\,d\bigl(F_0-\hat F_n\bigr)(x)
\end{align*}

So this time we want to solve (approximately) the equation
\begin{align*}
&\int_{x\le u}\left\{\frac{\f_{\hat F_n}(u)}{\hat F_n(u)}\,h_1(u)+\frac{\f_{\hat F_n}(u)}{1-\hat F_n(u)}\,h_2(u)\right\}\,du+\int_{u<x\le v}\frac{\f_{\hat F_n}(v)-\f_{\hat F_n}(u)}{\hat F_n(v)-\hat F_n(u)}\,h(u,v)\,du\,dv\\
&=\k_{F_0}(x).
\end{align*}
Differentiating w.r.t.\ $x$, we get the equation
\begin{align*}
&-\frac{\f_{\hat F_n}(x)}{\hat F_n(x)}h_1(x)-\frac{\f_{\hat F_n}(x)}{1-\hat F_n(x)}h_2(x)
-\int\frac{\f_{\hat F_n}(x)-\f_{\hat F_n}(u)}{\hat F_n(x)-\hat F_n(u)}\,h(u,x)\,du\\
&\qquad\qquad\qquad\qquad\qquad\qquad\qquad\qquad+\int\frac{\f_{\hat F_n}(v)-\f_{\hat F_n}(x)}{\hat F_n(v)-\hat F_n(x)}\,h(x,v)\,dv\\
&=\k'_{F_0}(x).
\end{align*}

As before, we can switch again to the representation $\f(x)=\int_{y\in(x,M]}a(y)\,d\hat F_n(y)$ of integrated score functions $a$ and use $\int_{y\in(x,M]}a(y)\,d\hat F_n(y)=-\int_{y\in[0,x]}a(y)\,d\hat F_n(y)$. This yields the preceding equation with minus and plus signs reversed on the left side.
Now we get instead of the equations (\ref{matrix_eq}) the equations:
\begin{align}
\label{matrix_eq2}
&y_i\left\{\frac{\dd_i(h_1)}{\hat F_n(t_i)}+\frac{\dd_i(h_2)}{(1-\hat F_n(t_i))}+\sum_{j<i}\frac{\dd_{ji}(h)}{\hat F_(t_i)-\hat F_n(t_j)}+\sum_{j>i}\frac{\dd_{ij}(h)}{\hat F_n(t_j)-\hat F_n(t_i)}\right\}\nonumber\\
&=\k_{F_0}(t_{i+1})-\k_{F_0}(t_i)+\sum_{j<i}\frac{\dd_{ji}(h)\,y_j}{\hat F_(t_i)-\hat F_n(t_j)}+\frac{\dd_{ij}(h)\,y_j}{\hat F_n(t_j)-\hat F_n(t_i)},\qquad i=1,\dots,m-1,
\end{align}
As noted in \cite{GeGr:99}, p.\ 669, this is an equation of type $A{\bmy}={\bm b}$, where $A$ is a symmetrix, strictly diagonally dominant $M$-matrix (also called a Stieltjes matrix), which means that the equation has a unique solution, see \cite{Berman_Plemmons:79}. This implies that the right-continuous piecewise constant function $\f_{\hat F_n}$, constant on the same intervals as $\hat F_n$, is uniquely determined. Of course the same considerations hold for the equations (\ref{matrix_eq}). 

In contrast with the situation for the LS estimator, the function $\phi_{\hat F_n}$ now has an interpretation as an integrated score function, but we do not have an explicit expression for it (see below). For the example, discussed above for the LS estimator, we computed $\f_{\hat F_n}$ for a sample of size $n=1000$, using the matrix equation (\ref{matrix_eq2}), where this time $\hat F_n$ is the nonparametric MLE.

\begin{figure}[!ht]
\centering
\includegraphics[width=0.5\textwidth]{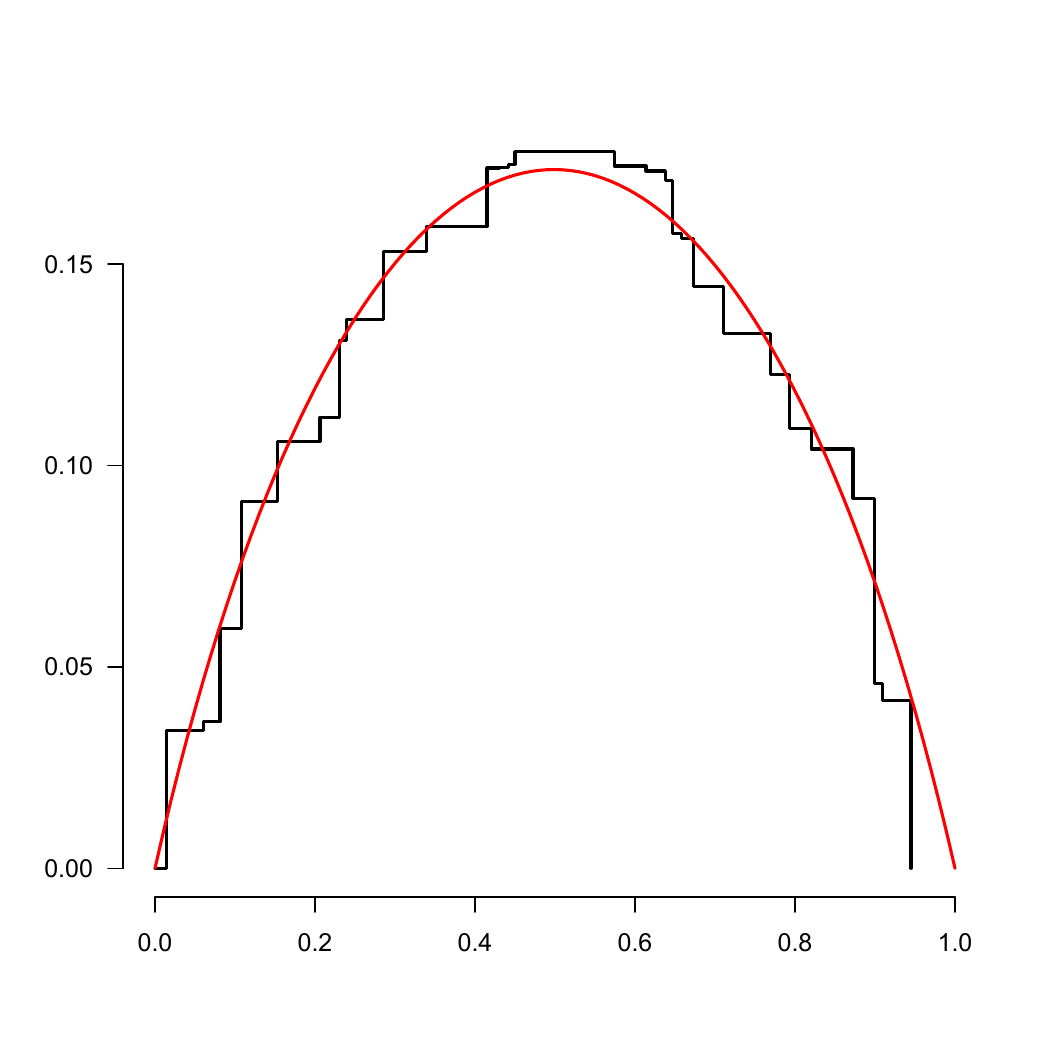}
\caption{The function $\f_{\hat F_n}$, solving equation (\ref{matrix_eq2}) for the nonparametric MLE, for a sample of size $n=1000$, where $F_0$ is the uniform distribution function on $[0,1]$ and $(U,V)$ is uniformly distributed on the upper triangle of the unit square. The red curve gives the solution of the corresponding integral equation for the underlying model.}
\label{figure:phi}
\end{figure}

We simulated the mean for the nonparametric MLE and the least squares estimators, minimizing (\ref{LS_criterion_IC}) and (\ref{LS_criterion_IC2a}), respectively, using the representation
\begin{align*}
m(F)=\int_0^1 (1-F(x))\,dx=\sum_{i=0}^m (1-F(t_i))\bigl(t_{i+1}-t_i\bigr),
\end{align*}
where $t_1<\dots< t_m$ are the points of jump of $F$, and $t_0=0$ and $t_{m+1}=1$.
For this situation the information lower bound of the asymptotic variance was numerically computed to be approximately $0.1198987$, see Section 4.2 of \cite{GeGr:99}. There is no explicit expression for it since it depends on the solution of an integral equation, for which there is no explicit form either. Based on $10,000$ simulations for sample size $1000$, we got as estimates of the asymptotic values of $n\text{var}(m(\hat F_n))$ the value $0.1237086$ for the nonparametric MLE, and the values $0.1240233$ and $0.1271471$ for the least squares estimators, minimizing (\ref{LS_criterion_IC}) and (\ref{LS_criterion_IC2a}), respectively.

All estimates give values reasonably close to the information lower bound and the least squares estimates also give $\sqrt{n}$-consistent estimates of the smooth functionals. The boxplots for the three estimates are shown in Figure \ref{figure:boxplot}, based on $10,000$ samples of size $n=1000$.

\begin{figure}[!ht]
\begin{subfigure}[b]{0.32\textwidth}
\includegraphics[width=\textwidth]{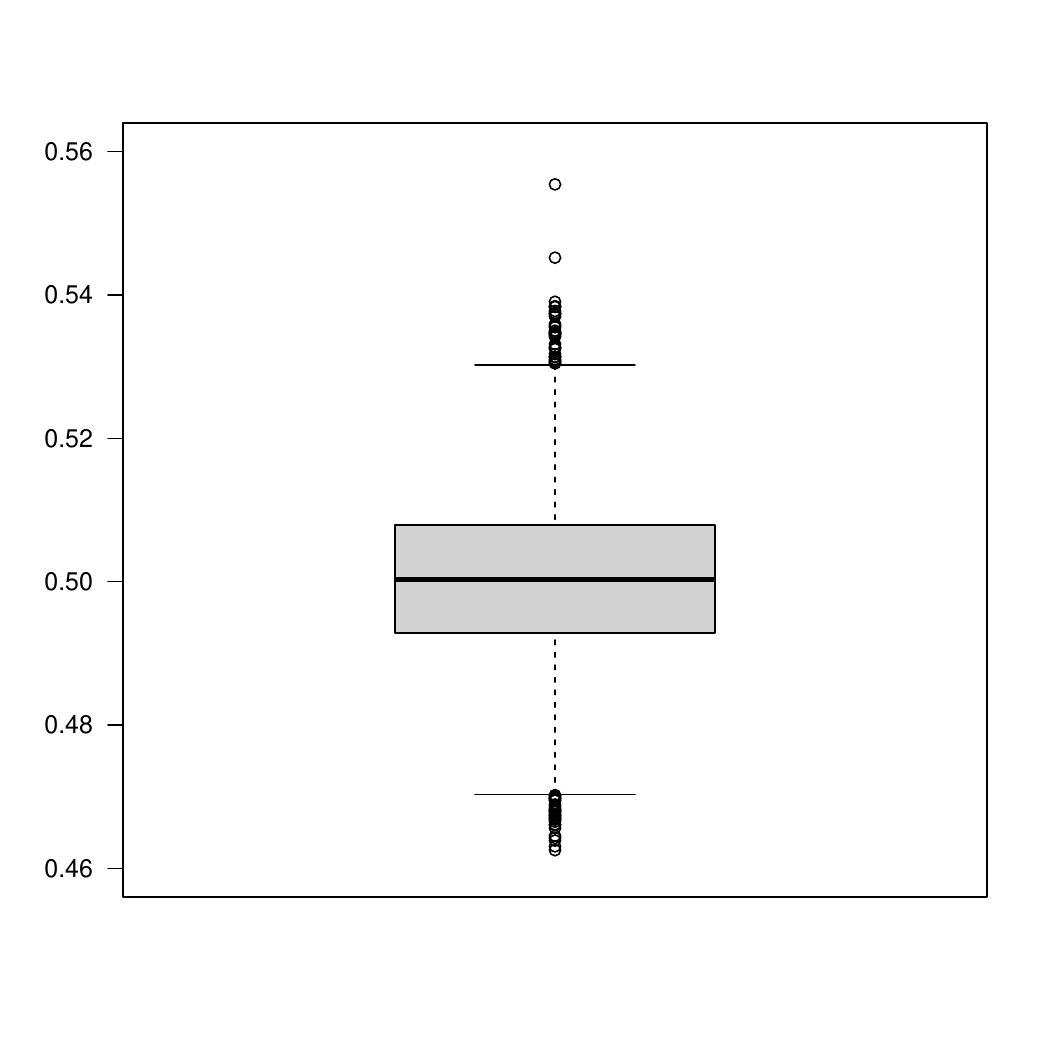}
\caption{}
\label{fig:oxPlot_mean_MLE}
\end{subfigure}
\begin{subfigure}[b]{0.32\textwidth}
\includegraphics[width=\textwidth]{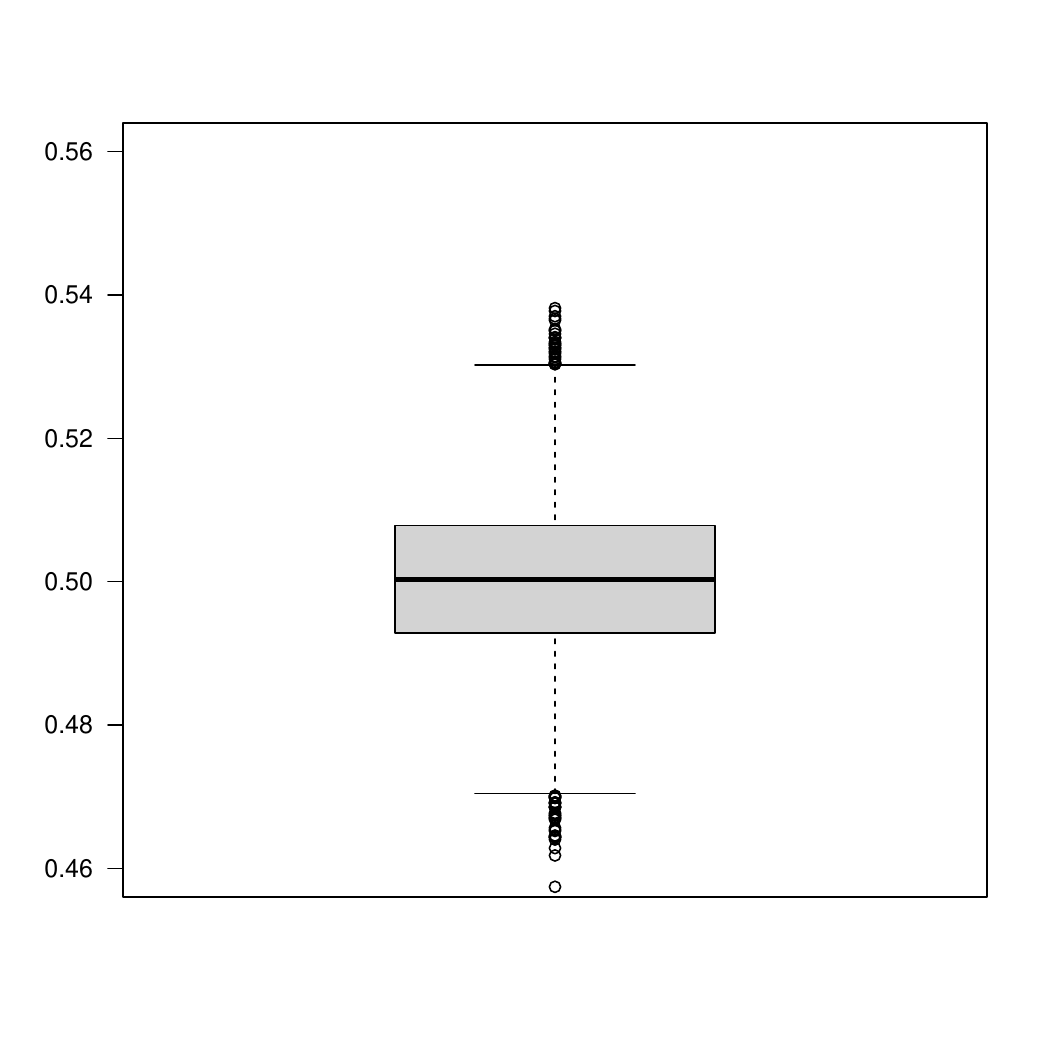}
\caption{}
\label{fig:BoxPlot_mean_LS}
\end{subfigure}
\begin{subfigure}[b]{0.32\textwidth}
\includegraphics[width=\textwidth]{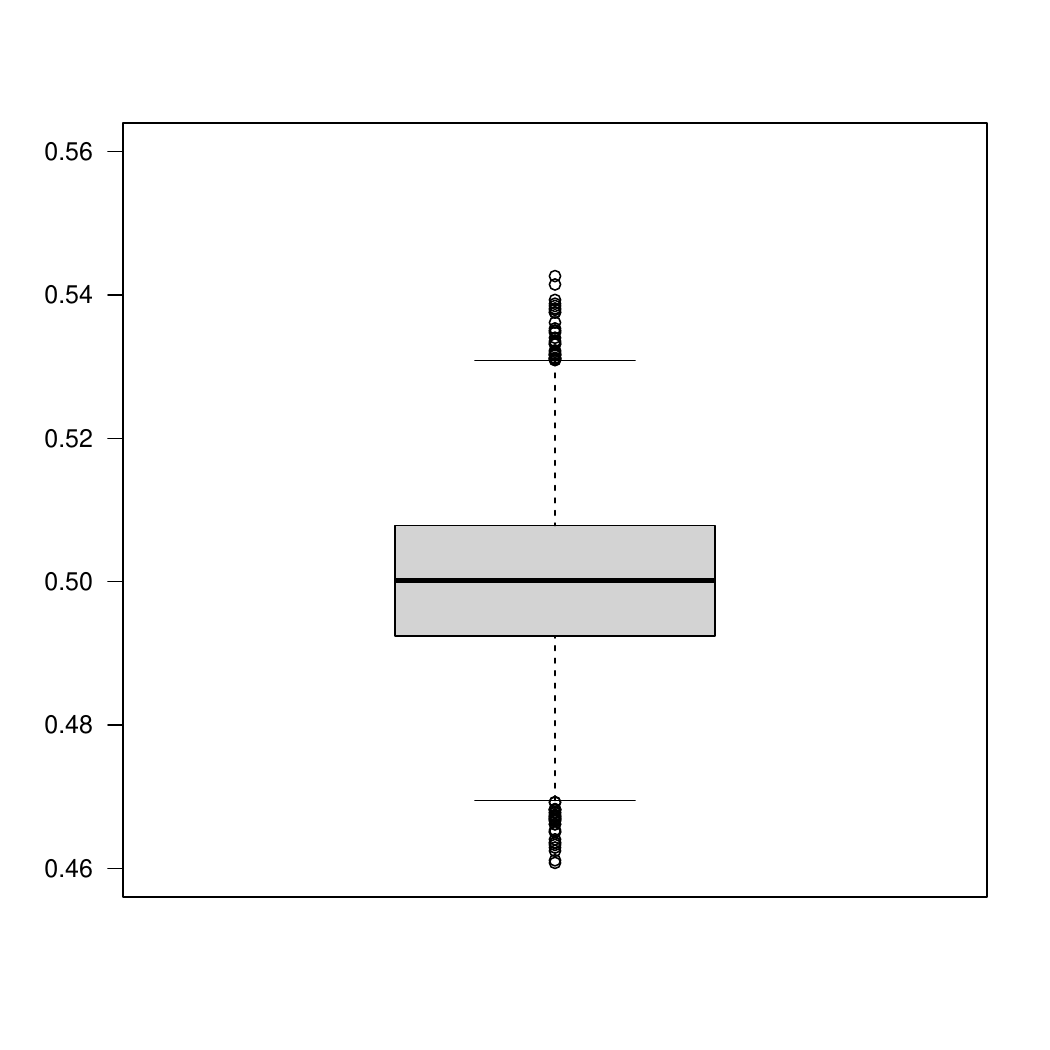}
\caption{}
\label{fig:BoxPlot_mean_simple_LS}
\end{subfigure}
\caption{(a) Mean estimates of MLE (b) Mean estimates of LS estimates, based on (\ref{LS_criterion_IC}) (c) Mean estimates of LS estimates, based on (\ref{LS_criterion_IC2a}). All boxplots are based on $10,000$ samples of size $n=1000$.}
\label{figure:boxplot}
\end{figure}
 It is clear that the differences of the three estimators are rather small in estimating the first moment. It should be noted, though, that the finer distinctions in the pointwise behavior of the estimators are ``washed away'' in the estimation of the smooth functionals.

\section{Conclusion}
\label{section:conclusion}
We stated a limit result for the least squares estimator minimizing (\ref{LS_criterion_IC}) for interval censoring, case 2, in the ``non-separated case'' (``observation intervals can be arbitrariy small''), in which situation the limit behavior of the nonparametric MLE is still unknown. This is Theorem  \ref{th:limit_LS} in section \ref{sec:LS}. For the separated case the limit distribution of the nonparametric MLE was derived in \cite{piet:96}. One could say (as has been done) that the separated case is the more important case (``there will always be a positive time interval between examination times of the doctor''), but the situation where one does not have the limit for the non-separated case is rather unsatisfactory. After all, allowing arbitrarily small intervals is mostly a matter of type of scaling one allows.

There is a conjecture that the nonparametric MLE has a faster rate of convergence in section 5.2 of part 2 of \cite{GrWe:92} and there also has been the  construction of a histogram-type estimator with a faster local rate in \cite{lucien:99}, but this conjecture does not show up for the sample sizes $n=1000$ and $n=10,000$ in our simulations. See, in particular, Figure \ref{figure:MLE+LS_variances10,000} for sample size $n=10,000$ in the present paper. But it is conceivable that the $(n\log n)^{1/3}$-rate starts showing up for astronomical sample sizes. It is true that for sample size $n=10,000$ its variance has become smaller than that of the LS estimate, reversing the relation between these variances for $n=1000$.

For the least squares estimator minimizing (\ref{LS_criterion_IC}) one can extend the result to cover the whole range of situations from separated to non-separated. The least squares estimator minimizing (\ref{LS_criterion_IC}) seems generally to have a smaller (pointwise) variance than the nonparametric MLE for moderate sample sizes (like $n=1000$).

We also discussed a simpler least squares estimatior, minimizing (\ref{LS_criterion_IC2a}) which does not have to be calculated by an iterative procedure and also does not need Lagrange multipliers to keep the values of the estimator inside the interval $[0,1]$. But this estimator is clearly inferior to the least squares estimator minimizing (\ref{LS_criterion_IC}) in the pointwise estimation problem. In estimating smooth functionals, like the first moment, the estimators seem to have a rather similar behavior, as shown in Section \ref{section:smooth_functionals}

Algorithms for computing the least squares estimator were discussed and in Section \ref{section:smooth_functionals} we provide smooth functional theory needed to show that the so-called ``off-diagonal terms'' in the derivation of the asymptotic distribution are negligible. 
For the well-known case of current status data (the simplest case of interval censoring)  the least squares approach coincides with the aproach  based on analyzing the likelihood, since in that case the least square estimate and the nonparametric MLE are the same.

{\tt C++} routines with makefiles to check the algorithms and simulations are given in \cite{piet_intcens:25}. They use the iterative convex minorant algorithm.

\section{Appendix}
\label{section:appendix}
\subsection{Variance and drift of the relevant limit process for Theorem \ref{th:limit_LS}}
\label{Appendix1}
Let, for a distribution function $F$, the functions $\psi_{1,F}$ and  $\psi_{2,F}$ defined by
\begin{align}
\label{def_psi1}
\psi_{1,F}(u,v,\d_0,\d_1)=\d_0-F(u)-\{\d_1-F(v)+F(u)\}
\end{align}
and
\begin{align}
\label{def_psi2}
\psi_{2,F}(u,v,\d_0,\d_1)=\d_1-\{F(v)-F(u)\}+\{\d_0+\d_1-F(v)\}.
\end{align}

Then we have, for fixed $t_0\in(0,M)$, and $W_{n,\hat F_n}$ is defined by (\ref{def_W_F}) 
\begin{align*}
&W_{n,\hat F_n}(t_0+n^{-1/3}t)-W_{n,\hat F_n}(t_0)=\int_{s\in[t_0,t_0+n^{-1/3}t]}\,dW_{n,\hat F_n}(s)\\
&=\int_{u\in[t_0,t_0+n^{-1/3}t]}\psi_{1,\hat F_n}(u,v,\d_0,\d_1)\,d\Q_n+\int_{v\in[t_0,t_0+n^{-1/3}t]}\psi_{2,\hat F_n}(u,v,\d_0,\d_1)\,d\Q_n.
\end{align*}
Furthermore,
\begin{align}
\label{expansion_W_F}
&\int_{u\in[t_0,t_0+n^{-1/3}t]}\psi_{1,\hat F_n}(u,v,\d_0,\d_1)\,d\Q_n
+\int_{v\in[t_0,t_0+n^{-1/3}t]}\psi_{2,\hat F_n}(u,v,\d_0,\d_1)\,d\Q_n\nonumber\\
&=\int_{u\in[t_0,t_0+n^{-1/3}t]}\left\{\psi_{1,\hat F_n}(u,v,\d_0,\d_1)-\f_{1,F_0}(u,v,\d_0,\d_1)\right\}\,d\Q_n\nonumber\\
&\qquad+\int_{u\in[t_0,t_0+n^{-1/3}t]}\left\{\psi_{2,\hat F_n}(u,v,\d_0,\d_1)-\psi_{2,F_0}(u,v,\d_0,\d_1)\right\}\,d\Q_n\nonumber\\
&\qquad +X_n(t),
\end{align}
where
\begin{align}
\label{def_X_n}
&X_n(t)\nonumber\\
 &=\int_{u\in[t_0,t_0+n^{-1/3}t]}\psi_{1,F_0}(u,v,\d_0,\d_1)\,d\Q_n
 +\int_{v\in[t_0,t_0+n^{-1/3}t]}\psi_{2,F_0}(u,v,\d_0,\d_1)\,d\Q_n\nonumber\\
 &=\int_{u\in[t_0,t_0+n^{-1/3}t]}\psi_{1,F_0}(u,v,\d_0,\d_1)\,d\bigl(\Q_n-Q_0\bigr)\nonumber\\
 &\qquad\qquad\qquad\qquad + \int_{v\in[t_0,t_0+n^{-1/3}t]}\psi_{2,F_0}(u,v,\d_0,\d_1)\,d\bigl(\Q_n-Q_0\bigr),
\end{align}
where $Q_0$ is the underlying probability measure for the $(U_i,V_i,\dd_{i0},\dd_{i1})$.
We also define $X_n(t)$ for $t<0$ by changing the interval $[t_0,t_0+n^{-1/3}t]$ to $[t_0+n^{-1/3}t,t_0]$.
For this process, we have the following lemma.

\begin{lemma}
\label{limit_X_n}
Let the conditions of Theorem \ref{th:limit_LS} be satisfied. Then $n^{2/3}X_n$ converges in distribution, in the topology of uniform convergence on compacta, to the process
\begin{align*}
t\mapsto a_{t_0}W(t),\qquad t\in\R,
\end{align*}
where $W$ is standard two-sided Brownian motion and $a_{t_0}$ is defined by (\ref{scale_IC}).
\end{lemma}

\begin{remark}
{\rm Note the similarity with Lemma 9.4 in \cite{piet:24}, but note that we now take $F_0$ instead of $\hat F_n$ in the definition of $X_n$.
}
\end{remark}

\begin{proof}
Note that we can write (\ref{def_X_n}) in the form:
\begin{align}
\label{X}
X_n(t)&=n^{-1}\sum_{j:t_0\leq U_j\le t_0+n^{-1/3}t}\left\{\dd_{j0}-F_0(U_j)\right\}\nonumber\\
&\qquad-n^{-1}\sum_{j:t_0\leq U_j\le t_0+n^{-1/3}t}\left\{\dd_{j1}-\{F_0(V_j)-F_0(U_j))\}\right\}\nonumber\\
&\qquad+n^{-1}\sum_{j:t_0\leq V_j\le t_0+n^{-1/3}t}\left\{\dd_{j1}-\{F_0(V_j)-F_0(U_j))\}\right\}\nonumber\\
&\qquad+n^{-1}\sum_{j:t_0\leq V_j\le t_0+n^{-1/3}t}\left\{\dd_{j0}+\dd_{j1}-F_0(V_j)\}\right\}.
\end{align}
We have:
\begin{align}
\label{variance_expansion}
&n^{4/3}\text{var}\bigl(X_n(t)\bigr)\nonumber\\
&\sim tF_0(t_0)\{1-F_0(t_0)\}\bigl\{h_1(t_0)+h_2(t_0)\bigr\}\nonumber\\
&\qquad+t\int_{v=t_0}^M\{F_0(v)-F_0(t_0)\}\left[1-\{F_0(v)-F_0(t_0)\}\right]\,h(t_0,v)\,dv\nonumber\\
&\qquad+t\int_{u=0}^{t_0}\{F_0(t_0)-F_0(u)\}\left[1-\{F_0(t_0)-F_0(u)\}\right]\,h(u,t_0)\,du\nonumber\\
&\qquad+2tF_0(t_0)\int_{v=t_0}^M\{F_0(v)-F_0(t_0)\}\,h(t_0,v)\,dv\nonumber\\
&\qquad+2t\{1-F_0(t_0)\}\int_{u=0}^{t_0}\{F_0(t_0)-F_0(u)\}\,h(u,t_0)\,du,
\end{align}
where $h_1$ and $h_2$ are the marginal densities of $h$.

This is seen as follows. Conditionally on $(U_j,V_j)$, the triple $(\dd_{j0},\dd_{j1},\dd_{j2})$ has a trinomial distribution, with parameters
\begin{align*}
p=F_0(U_j),\qquad q=F_0(V_j)-F_0(U_j),\qquad 1-p-q=1-F_0(V_j).
\end{align*}
This yields the first three terms on the right of (\ref{variance_expansion}), since the conditional variances are of the form $p(1-p)$, $q(1-q)$ and $(p+q)(1-p-q)$. The last two terms on the right of (\ref{variance_expansion}) are due to the covariance of $\dd_{j0}$ and $-\dd_{j1}$ in the first two terms and $\dd_{j1}$ and $-\dd_{j2}$ in the last two term on the right-hand side of (\ref{X}), respectively. Other covariances are of lower order, since these involve values of $U_j$ and $V_j$ in the shrinking interval $[t_0,t_0+n^{-1/3}t]$.
So we get
\begin{align*}
n^{2/3}X_n\stackrel{{\cal D}}\longrightarrow a_{t_0}W,\qquad n\to\infty.
\end{align*}
from tightness and the central limit theorem.
\end{proof}

\subsection{Proof of Theorem \ref{th:limit_LS}}
\label{Appendix1a}
\begin{proof}
We now generalize the approach in the proof of Theorem 3.7 in \cite{piet_geurt:14}. Let $\G_n$ be defined by
\begin{align*}
\G_n(t)=2\int_{u\le t}\,d\H_n(u,v)+2\int_{v\le t}\,d\H_n(u,v),
\end{align*}
and let $V_n$ be defined by
\begin{align*}
V_n(t)=\int_{w\le t}\hat F_n(w)\,d\G_n(w)+W_{n,\hat F_n}(t),\qquad t\ge0,
\end{align*}
where $W_{n,\hat F_n}$ is defined by (\ref{def_W_F}). The values of the processes are zero for $t\le0$.

 Then $\hat F_n$ is the left-continuous slope of the greatest convex minorant of cumulative sum (``cusum'') diagram
\begin{align}
\label{cusum_diagram}
(\G_n(t),V_n(t)),\qquad t\ge0,
\end{align}
where (\ref{cusum_diagram}) runs through the points $(\G_n(U_i),V_n(U_i))$ and $\G_n(V_i),V_n(V_i))$, $i=1,\dots,n$, and the slopes are put equal to zero if they are less than zero and equal to $1$ if they are larger than $1$.

We next define, for $a\in(0,1)$,
\begin{align*}
U_n(a)=\text{argmin}\left\{t\in\R:V_n(t)-a\G_n(t)\right\},
\end{align*}
where we take the supremum in case of multiple argmins and use, for $a_0=F_0(t_0)$, the ``switch relation''
\begin{align*}
P\left\{n^{1/3}\bigl\{\hat F_n(t_0)-F_0(t_0)\bigr\}\ge x\right\}=P\left\{U_n(a_0+n^{-1/3}x)\le t_0\right\},
\end{align*}
see Figure 3.7 on p.\ 69 of \cite{piet_geurt:14}. We have:
\begin{align*}
&U_n(a_0+n^{-1/3}x)-t_0=\text{argmin}\left\{t\in\R:V_n(t)-(a_0+n^{-1/3}x)\G_n(t)\right\}-t_0\\
&=\text{argmin}\left\{t_0+n^{-1/3}t\in\R:V_n(t_0+n^{-1/3}t)-(a_0+n^{-1/3}x)\G_n(t_0+n^{-1/3}t)\right\}-t_0\\
&=n^{-1/3}\text{argmin}\left\{t\in\R:V_n(t_0+n^{-1/3}t)-(a_0+n^{-1/3}x)\G_n(t_0+n^{-1/3}t)\right\}.
\end{align*}
So we can conclude
\begin{align*}
&n^{1/3}\bigl\{U_n(a_0+n^{-1/3}x)-t_0\bigr\}\\
&=\text{argmin}\left\{t\in\R:V_n(t_0+n^{-1/3}t)-(a_0+n^{-1/3}x)\G_n(t_0+n^{-1/3}t)\right\},
\end{align*}
and we have to determine
\begin{align*}
P\left\{n^{1/3}\bigl\{U_n(a_0+n^{-1/3}x)-t_0\bigr\}\le0\right\}.
\end{align*}
Since the argmin function does not change by adding a constant to the argument or multiplying the argument by a constant, we can replace $n^{1/3}\left\{U_n(a_0+n^{-1/3}x)-t_0\right\}$ by the argmin (in $t$) of:
\begin{align*}
n^{2/3}\bigl\{V_n(t_0+n^{-1/3}t)-V_n(t_0)\bigr\}-n^{2/3}(a_0+n^{-1/3}x)\bigl\{\G_n(t_0+n^{-1/3}t)-\G_n(t_0)\bigr\}.
\end{align*}

We get, if $t\ge0$,
\begin{align*}
&V_n(t_0+n^{-1/3}t)-V_n(t_0)-a_0\left\{\G_n(t_0+n^{-1/3}t)-\G_n(t_0)\right\}\\
&=\int_{w\in(t_0,t_0+n^{-1/3}t]}\left\{\hat F_n(w)-F_0(t_0)\right\}\,d\G_n(w)+W_{n,\hat F_n}(t_0+n^{-1/3}t)-W_{n,\hat F_n}(t_0)\\
&=\int_{w\in(t_0,t_0+n^{-1/3}t]}\left\{F_0(w)-F_0(t_0)\right\}\,d\G_n(w)+X_n(t)\\
&\qquad-\int_{u\in[t_0,t_0+n^{-1/3}t]}\left\{F_0(v)-\hat F_n(v)\right\}\,d\H_n(u,v)\nonumber\\
&\qquad-\int_{v\in[t_0,t_0+n^{-1/3}t]}\left\{F_0(u)-\hat F_n(u)\right\}\,d\H_n(u,v),
\end{align*}
where $X_n$ is defined by (\ref{X}).
The last two terms are smooth functionals of the model and can be shown to be of order $O_p(n^{-5/6})$. We discuss this matter  in section \ref{appendix2} and Section \ref{section:smooth_functionals}.

So we get:
\begin{align}
\label{prelimit}
&n^{2/3}\left\{V_n(t_0+n^{-1/3}t)-V_n(t_0)\right\}-n^{2/3}(a_0+n^{-1/3}x)\left\{\G_n(t_0+n^{-1/3}t)-\G_n(t_0)\right\}\nonumber\\
&=n^{2/3}\int_{w\in(t_0,t_0+n^{-1/3}t]}\left\{F_0(w)-F_0(t_0)\right\}\,d\G_n(w)+n^{2/3}X_n(t)\nonumber\\
&\qquad\qquad\qquad\qquad-n^{1/3}x\left\{\G_n(t_0+n^{-1/3}t)-\G_n(t_0)\right\}
+O_p\left(n^{-1/6}\right).
\end{align}
Since
\begin{align*}
&n^{2/3}\int_{u\in[t_0,t_0+n^{-1/3}t]}\{F_0(w)-F_0(t_0)\}\,d\G_n(w)
\sim f_0(t_0)\{h_1(t_0)+h_2(t_0)\}t^2,
\end{align*}
the right-hand side of (\ref{prelimit}) converges in distribution, in the topology of uniform convergence on compacta, to the process
\begin{align*}
t\mapsto a_{t_0}W(t)+f_0(t_0)\{h_1(t_0)+h_2(t_0)\}t^2-2\{h_1(t_0)+h_2(t_0)\}x,\qquad t\ge0.
\end{align*}
We get a similar result for $t\le0$, we and get therefore:
\begin{align*}
&P\left[n^{1/3}\left\{U_n(a_0+n^{-1/3}x)-t_0\right\}\le0\right]\\
&\longrightarrow P\left[\text{argmin}_t\left\{a_{t_0}W(t)+f_0(t_0)\{h_1(t_0)+h_2(t_0)\}t^2-2\{h_1(t_0)+h_2(t_0)\}x\right\}\le0\right].
\end{align*}
Next note that
 \begin{align*}
& P\left[\text{argmin}_t\left\{a_{t_0}W(t)+f_0(t_0)\{h_1(t_0)+h_2(t_0)\}t^2-2\{h_1(t_0)+h_2(t_0)\}x\right\}\le0\right]\\
&= P\left[\text{argmin}_t\left\{a_{t_0}W(t)+f_0(t_0)\{h_1(t_0)+h_2(t_0)\}\left(t-\frac{x}{f_0(t_0)}\right)^2\right\}\le0\right]\\
\end{align*}
Defining
\begin{align*}
T(x)=\text{argmin}_t\left\{a_{t_0}W(t)+f_0(t_0)\{h_1(t_0)+h_2(t_0)\}\left(t-\frac{x}{f_0(t_0)}\right)^2\right\}-\frac{x}{f_0(t_0)}\,,
\end{align*}
we have that the process
\begin{align*}
x\mapsto T(x),\qquad x\in\R,
\end{align*}
is stationary (this is one of the key results of \cite{gro:89}), so we have:
\begin{align*}
&P\left[\text{argmin}_t\left\{a_{t_0}W(t)+f_0(t_0)\{h_1(t_0)+h_2(t_0)\}\left(t-\frac{x}{f_0(t_0)}\right)^2\right\}\le0\right]\\
&=P\left\{T(x)\le0\right\}=P\left\{T(0)\le-\frac{x}{f_0(t_0)}\right\}=P\left\{T(0)\ge\frac{x}{f_0(t_0)}\right\},
\end{align*}
where the last equality follows from. the symmetry of the distribution of $T(0)$ around zero (compare with Theorem 5.2, p.
  95 in part II of \cite{GrWe:92}).
  
 The result of Theorem \ref{th:limit_LS} now follows from Brownian scaling,
 \end{proof}

\subsection{Negligibility of the ``off-diagonal terms''}
\label{appendix2}
In subsection \ref{Appendix1a} we met two terms which we call ``off-diagonal terms'', because, for example in the interior point method for solving the minimization problem, they correspond to non-zero off-diagonal terms of the Hessian. This is a difficulty we do not have to deal with in the current status model.  They are the terms
\begin{align}
\label{off-diag1}
\int_{u\in[t_0,t_0+n^{-1/3}t]}\bigl\{F_0(v)-\hat F_n(v)\bigr\}\,d\H_n(u,v)
\end{align}
and
\begin{align}
\label{off-diag2}
\int_{v\in[t_0,t_0+n^{-1/3}t]}\bigl\{F_0(u)-\hat F_n(u)\bigr\}\,d\H_n(u,v).
\end{align}
In (\ref{off-diag1}) the variable $v$ roughly varies between $t_0$ and $M$, since we have $v\ge u$. Likewise, in in (\ref{off-diag2}) the variable $u$ roughly varies between $0$ and $t_0$.

We now take a closer look at (\ref{off-diag1}). First of all, we can write
\begin{align*}
&\int_{u\in[t_0,t_0+n^{-1/3}t]}\bigl\{F_0(v)-\hat F_n(v)\bigr\}\,d\H_n(u,v)\\
&=\int_{u\in[t_0,t_0+n^{-1/3}t]}\bigl\{F_0(v)-\hat F_n(v)\bigr\}\,d\bigl(\H_n-H\bigr)(u,v)\\
&\qquad+\int_{u\in[t_0,t_0+n^{-1/3}t]}\bigl\{F_0(v)-\hat F_n(v)\bigr\}\,dH(u,v).
\end{align*}
For the first term on the right we have:
\begin{align*}
&\int_{u\in[t_0,t_0+n^{-1/3}t]}\bigl\{F_0(v)-\hat F_n(v)\bigr\}\,d\bigl(\H_n-H\bigr)(u,v)\\
&=\int_{u\in[t_0,t_0+n^{-1/3}t]}F_0(v)\,d\bigl(\H_n-H\bigr)(u,v)
-\int_{u\in[t_0,t_0+n^{-1/3}t]}\hat F_n(v)\,d\bigl(\H_n-H\bigr)(u,v),
\end{align*}
where both terms are of order $O_p(n^{-5/6})$. This is obvious for the first term, and for the second term if follows from the entropy with bracketing for bounded monotone functions.

So we turn to the term
\begin{align*}
\int_{u\in[t_0,t_0+n^{-1/3}t]}\bigl\{F_0(v)-\hat F_n(v)\bigr\}\,dH(u,v).
\end{align*}
This term is also expected to be of order $O_p(n^{-5/6})$.
The key to this is the fact that integrals of the form
\begin{align*}
\int_a^b\bigl\{F_0(x)-\hat F_n(x)\bigr\}\,dx
\end{align*}
for $b>a$ are of order $O_p(n^{-1/2})$. This follows from smooth functional theory, discussed in Section \ref{section:smooth_functionals}. We cannot work with simple upper bounds for the distance between $\hat F_n$ and $F_0$, since these will only give us order $O_p(n^{-2/3})$ instead of order $O_p(n^{-5/6})$.

 \subsection{Theorem \ref{th:limit_LS2}}
\label{appendix_teorem2}
The proof proceeds along the lines of the proof of Theorem \ref{th:limit_LS}, but is much simpler, because we do not have to deal with ``off-diagonal terms'' and can compute the estimator in one step. Iterations, as in the situation of Theorem \ref{th:limit_LS}, are not needed for the computation and the cusum diagram is not ``self-induced''. As noted above, the price we have to pay for this simplicity, though, is that the estimator is less efficient.

We start by analyzing the covariance structure of the process $W_{n,F_0}^{(2)}$, defined by (\ref{def_W_F2}). Analogously to the proof of Theorem \ref{th:limit_LS}, we define the process $X_n^{(2)}$ by
\begin{align*}
X_n^{(2)}(t)=W_{n,F_0}^{(2)}(t_0+n^{-1/3}t)-W_{n,F_0}^{(2)}(t_0).
\end{align*}
For the process $X_n^{(2)}$ we have the following lemma.

\begin{lemma}
\label{limit_X_n2}
Let the conditions of Theorem \ref{th:limit_LS} be satisfied. Then $n^{2/3}X_n^{(2)}$ converges in distribution, in the topology of uniform convergence on compacta, to the process
\begin{align*}
t\mapsto a_{t_0}'W(t),\qquad t\in\R,
\end{align*}
where $W$ is standard two-sided Brownian motion and $a_{t_0}'$ is defined by (\ref{scale_IC2}).
\end{lemma}

\begin{proof}
Let $t\ge0$. Then:
\begin{align}
\label{variance_X_2}
X_n^{(2)}(t)
&=n^{-1}\sum_{j:t_0\leq U_j\le t_0+n^{-1/3}t}\left\{\dd_{j0}-F_0(U_j)\right\}\nonumber\\
&\qquad\qquad\qquad\qquad+n^{-1}\sum_{j:t_0\leq V_j\le t_0+n^{-1/3}t}\left\{\dd_{j0}+\dd_{j1}-\{F_0(V_j)\}\right\}.
\end{align}
This means:
\begin{align*}
&n^{4/3}\text{var}\left(X_n^{(2)}(t)\right)\sim F_0(t_0)\{1-F_0(t_0)\}\{h_1(t_0)+h_2(t_0)\}.
\end{align*}
The covariance between $\dd_{j0}$ and $\dd_{j0}+\dd_{j1}$ in the first and second line of (\ref{variance_X_2}) gives a contribution of lower order, since this involves a $U_j$ and $V_j$ both lying in the shrinking interval $[t_0,t_0+n^{-1/3}t]$, which has a probability tending to zero, as $n\to\infty$.

So the result follows again from tightness and the central limit theorem.
\end{proof}

\begin{proof}[Proof of Theorem \ref{th:limit_LS2}]
This time we define
\begin{align*}
\G_n(t)=\int_{u\le t}\,d\H_n(u,v)+\int_{v\le t}\,d\H_n(u,v),
\end{align*}
and, as in the proof of Theorem \ref{th:limit_LS}, $V_n$ by
\begin{align*}
V_n(t)=\int_{w\le t}\hat F_n(w)\,d\G_n(w)+W_{n,\hat F_n}^{(2)}(t),\qquad t\ge0.
\end{align*}
The values of the processes are zero for $t\le0$. Note that $V_n$ can this time be written in the form
\begin{align*}
V_n(t)=\int_{w\le t}F_0(w)\,d\G_n(w)+W_{n,F_0}^{(2)}(t),\qquad t\ge0,
\end{align*}
so $\hat F_n$ is not part of this representation of $V_n$.

We next define, for $a\in(0,1)$,
\begin{align*}
U_n(a)=\text{argmin}\left\{t\in\R:V_n(t)-a\G_n(t)\right\},
\end{align*}
taking the supremum in case of multiple argmins and use, for $a_0=F_0(t_0)$, the switch relation
\begin{align*}
P\left\{n^{1/3}\bigl\{\hat F_n(t_0)-F_0(t_0)\bigr\}\ge x\right\}=P\left\{U_n(a_0+n^{-1/3}x)\le t_0\right\}.
\end{align*}
We have again
\begin{align*}
&n^{1/3}\left\{U_n(a_0+n^{-1/3}x)-t_0\right\}\\
&=\text{argmin}\left\{t\in\R:V_n(t_0+n^{-1/3}t)-(a_0+n^{-1/3}x)\G_n(t_0+n^{-1/3}t)\right\},
\end{align*}
and we have to determine
\begin{align*}
P\left\{n^{1/3}\left\{U_n(a_0+n^{-1/3}x)-t_0\right\}\le0\right\}.
\end{align*}

We get, if $t\ge0$,
\begin{align*}
&V_n(t_0+n^{-1/3}t)-V_n(t_0)-a_0\left\{\G_n(t_0+n^{-1/3}t)-\G_n(t_0)\right\}\\
&=\int_{w\in(t_0,t_0+n^{-1/3}t]}\left\{F_0(w)-F_0(t_0)\right\}\,d\G_n(w)+X_n^{(2)}(t).
\end{align*}

Hence:
\begin{align}
\label{prelimit2}
&n^{2/3}\left\{V_n(t_0+n^{-1/3}t)-V_n(t_0)\right\}-n^{2/3}(a_0+n^{-1/3}x)\left\{\G_n(t_0+n^{-1/3}t)-\G_n(t_0)\right\}\nonumber\\
&=n^{2/3}\int_{w\in(t_0,t_0+n^{-1/3}t]}\left\{F_0(w)-F_0(t_0)\right\}\,d\G_n(w)+n^{2/3}X_n^{(2)}(t)\nonumber\\
&\qquad\qquad\qquad\qquad\qquad\qquad\qquad\qquad-n^{1/3}x\left\{\G_n(t_0+n^{-1/3}t)-\G_n(t_0)\right\}.
\end{align}
Since
\begin{align*}
&n^{2/3}\int_{u\in[t_0,t_0+n^{-1/3}t]}\{F_0(w)-F_0(t_0)\}\,d\G_n(w)
\sim \tfrac12f_0(t_0)\{h_1(t_0)+h_2(t_0)\}t^2,
\end{align*}
the right-hand side of (\ref{prelimit2}) converges in distribution, in the topology of uniform convergence on compacta, to the process
\begin{align*}
t\mapsto a_{t_0}'W(t)+\tfrac12f_0(t_0)\{h_1(t_0)+h_2(t_0)\}t^2-\{h_1(t_0)+h_2(t_0)\}x,\qquad t\ge0.
\end{align*}
We get a similar result for $t\le0$, we and get therefore:
\begin{align*}
&P\left[n^{1/3}\left\{U_n(a_0+n^{-1/3}x)-t_0\right\}\le0\right]\\
&\longrightarrow P\left[\text{argmin}_t\left\{a_{t_0}'W(t)+\tfrac12f_0(t_0)\{h_1(t_0)+h_2(t_0)\}t^2-\{h_1(t_0)+h_2(t_0)\}x\right\}\le0\right].
\end{align*}
Next note that
 \begin{align*}
& P\left[\text{argmin}_t\left\{a_{t_0}'W(t)+f_0(t_0)\{h_1(t_0)+h_2(t_0)\}t^2-\{h_1(t_0)+h_2(t_0)\}x\right\}\le0\right]\\
&= P\left[\text{argmin}_t\left\{a_{t_0}'W(t)+\tfrac12f_0(t_0)\{h_1(t_0)+h_2(t_0)\}\left(t-\frac{x}{f_0(t_0)}\right)^2\right\}\le0\right]\\
\end{align*}
From this point on the proof can be completed in the same way as the proof of Theorem \ref{th:limit_LS} with as only differences the presence of the factor $\tfrac12$ in front of $f_0(t_0)$ and the scale factor $a_{t_0}'$ instead of $a_{t_0}$.
\end{proof}

\bibliographystyle{imsart-nameyear}
\bibliography{cupbook}

\end{document}